\newtheorem{thm}{Theorem}[section]
\newtheorem{cor}[thm]{Corollary}
\newtheorem{lem}[thm]{Lemma}
\theoremstyle{definition}
\newtheorem{defn}[thm]{Definition}
\newtheorem{ex}[thm]{Example}
\newtheorem{remk}[thm]{Remark}
\newcommand{\HH}{\mathcal{H}}
\newcommand{\R}{\mathbb{R}}
\newcommand{\leqs}{\leqslant}
\newcommand{\geqs}{\geqslant}
\newcommand{\ap}{\alpha}
\newcommand{\ep}{\epsilon}
\newcommand{\ld }{\lambda}
\newcommand{\sm}{\,\sigma\,}
\numberwithin{equation}{section}
\DeclareMathOperator{\Sp}{Sp}
\DeclareMathOperator{\Range}{Range}
\begin{document}

\begin{frontmatter}

\title{Cancellability and Regularity of Operator Connections}

\author{Pattrawut Chansangiam} 
\ead{kcpattra@kmitl.ac.th}

\address{Department of Mathematics, Faculty of Science, 
	King Mongkut's Institute of Technology Ladkrabang, 
	Bangkok 10520, Thailand.}

\begin{abstract}

An operator connection is a binary operation assigned to each pair of positive operators
satisfying monotonicity, continuity from above and the transformer inequality.
In this paper, we introduce and characterize the concepts of cancellability and regularity of operator connections with respect to operator monotone functions, Borel measures and certain operator equations.
In addition, we investigate the existence and the uniqueness of solutions for such operator equations. 

\end{abstract}

\begin{keyword}
operator connection \sep operator monotone function \sep operator mean
\MSC[2010]47A63 \sep 47A64  
\end{keyword}

\end{frontmatter}

\section{Introduction}

A general theory of connections and means for positive operators
was given by Kubo and Ando \cite{Kubo-Ando}.
Let $B(\HH)$ be the algebra of bounded linear operators on
a Hilbert space $\HH$. The set of positive operators on $\HH$ is denoted by $B(\HH)^+$.
Denote the spectrum of an operator $X$ by $\Sp(X)$.
For Hermitian operators $A,B \in B(\HH)$,
the partial order $A \leqs B$ means that $B-A \in B(\HH)^+$.
The notation $A>0$ suggests that $A$ is a strictly positive operator.
A \emph{connection} is a binary operation $\sm$ on $B(\HH)^+$
such that for all positive operators $A,B,C,D$:
\begin{enumerate}
	\item[(M1)] \emph{monotonicity}: $A \leqs C, B \leqs D \implies A \sm B \leqs C \sm D$
	\item[(M2)] \emph{transformer inequality}: $C(A \sm B)C \leqs (CAC) \sm (CBC)$
	\item[(M3)] \emph{continuity from above}:  for $A_n,B_n \in B(\HH)^+$,
                if $A_n \downarrow A$ and $B_n \downarrow B$,
                 then $A_n \sm B_n \downarrow A \sm B$.
                 Here, $A_n \downarrow A$ indicates that $A_n$ is a decreasing sequence
                 and $A_n$ converges strongly to $A$.
\end{enumerate}
Two trivial examples are the left-trivial mean $\omega_l : (A,B) \mapsto A$ and the right-trivial mean $\omega_r: (A,B) \mapsto B$.
Typical examples of a connection are the sum $(A,B) \mapsto A+B$ and the parallel sum
\begin{align*}
    A \,:\,B = (A^{-1}+B^{-1})^{-1}, \quad A,B>0,
\end{align*}
the latter being introduced by Anderson and Duffin \cite{Anderson-Duffin}.
From the transformer inequality, every connection is \emph{congruence invariant} in the sense that for each $A,B \geqs 0$
and $C>0$ we have
\begin{align*}
	C(A \sm B)C \:=\: (CAC) \sm (CBC).
\end{align*}

A \emph{mean} is a connection $\sigma$ with normalized condition $I \sm I = I$ or, equivalently,
fixed-point property $A \sm A =A$ for all $A \geqs 0$.
The class of Kubo-Ando means cover many well-known operator means
in practice, e.g.
	\begin{itemize}
		\item	$\ap$-weighted arithmetic means: $A \triangledown_{\ap} B = (1-\ap)A + \ap B$
		\item	$\ap$-weighted geometric means:
    			$A \#_{\ap} B =  A^{1/2}
    			({A}^{-1/2} B  {A}^{-1/2})^{\ap} {A}^{1/2}$
		\item	$\ap$-weighted harmonic means: $A \,!_{\ap}\, 
				B = [(1-\ap)A^{-1} + \ap B^{-1}]^{-1}$ % for $A,B>0$.
		\item	logarithmic mean: $(A,B) \mapsto A^{1/2}f(A^{-1/2}BA^{-1/2})A^{1/2}$ 
		where $f: \R^+ \to \R^+$, $f(x)=(x-1)/\log{x}$, $f(0) \equiv 0$ and $f(1) \equiv 1$.
		Here, $\R^+=[0, \infty)$.
	\end{itemize}
%This axiomatic approach has many applications in operator inequalities
%(e.g. \citep{Ando, Furuta, Mond et al.}),
%operator equations (e.g. \citep{Anderson-Morley_matrix equation, Lim_matrix eq})
%and operator entropy (\citep{Fujii_entropy}).

It is a fundamental that there are one-to-one correspondences between the following objects:
\begin{enumerate}
	\item[(1)]	operator connections on $B(\HH)^+$
	\item[(2)]	operator monotone functions from $\R^+$ to $\R^+$
	\item[(3)]	finite (positive) Borel measures on $[0,1]$
	\item[(4)]	monotone (Riemannian) metrics on the smooth manifold of positive definite matrices. 
\end{enumerate}
%A fundamental tool of Kubo-Ando theory of connections and means is the theory of
%operator monotone functions.
%This concept was introduced in \cite{Lowner}; see also \cite{Bhatia,Hiai,Hiai-Yanagi}.
Recall that a function $f: \R^+ \to \R^+$ is said to be \emph{operator monotone} if
\begin{align*}
	A \leqs B \implies f(A) \leqs f(B)
\end{align*}
for all positive operators $A,B \in B(\HH)$ and for all Hilbert spaces $\HH$.
This concept was introduced in \cite{Lowner}; see also \cite{Bhatia,Hiai,Hiai-Yanagi}.
A remarkable fact is that (see \cite{Hansen-Pedersen}) a function $f: \R^+ \to \R^+$
is operator monotone if and only if it is \emph{operator concave}, i.e.
\begin{align*}
	f((1-\ap)A + \ap B) \:\geqs\: (1-\ap) f(A) + \ap f(B), \quad \ap \in (0,1)
\end{align*}
holds for all positive operators $A,B \in B(\HH)$ and for all Hilbert spaces $\HH$.

%A major core of the theory of connections and means is the interplay between connections,
%operator monotone functions and Borel measures.
%Note first that if $\sigma$ and $\eta$ are connections and $k_1,k_2 \in \R^+ = [0,\infty)$, the %binary operation
%\begin{align*}
%		k_1 \sigma + k_2 \eta: (A,B) \mapsto k_1 (A \,\sigma\, B) + k_2 (A \,\eta\, B)
%\end{align*}
%is also a connection. This shows that the set of connections on $B(\HH)^+$ forms a cone.
%Introduce a partial order $\leqs$ on this cone by $\sigma \leqs \eta$
%if and only if $A \,\sigma\, B \leqs A \,\eta\, B$ for all $A,B \geqs 0$.
%Equip the cone of operator monotone functions from $\R^+$ to $\R^+$ with the pointwise order, i.e.
%$f \leqs g$ means that $f(x) \leqs g(x)$ for all $x \in \R^+$.
%The cone of finite Borel measures on $[0,\infty]$ is also equipped with the usual partial order.

A connection $\sigma$ on $B(\HH)^+$ can be characterized
via operator monotone functions as follows:

\begin{thm}[\cite{Kubo-Ando}] \label{thm: Kubo-Ando f and sigma}
	Given a connection $\sigma$, there is a unique operator monotone function $f: \R^+ \to \R^+$ satisfying
				\begin{align*}
    			f(x)I = I \sm (xI), \quad x \geqs 0.
				\end{align*}
	Moreover, the map $\sigma \mapsto f$ is a bijection.
	%affine order isomorphism.
	%Here, the order-isomorphism means that when $\sm_i \mapsto f_i$ for $i=1,2$,
	%we have $\sigma_1 \leqs \sigma_2 $ if and only if $f_1 \leqs f_2$.
\end{thm}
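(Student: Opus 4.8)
The plan is to build the function $f$ out of $\sm$, to check that it is operator monotone, and then to establish the bijection by showing both that $\sm$ is completely recoverable from $f$ (injectivity) and that every operator monotone $f:\R^+\to\R^+$ arises from some connection (surjectivity).

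\emph{Construction of $f$.} The first and most delicate step is to verify that, for each fixed $x\geqs 0$, the positive operator $I\sm(xI)$ is a scalar multiple of $I$. I would deduce this from the invariance of every connection under unitary conjugation, namely $U^\ast(A\sm B)U=(U^\ast AU)\sm(U^\ast BU)$ for every unitary $U$, which follows by applying the transformer inequality to the unitaries $U$ and $U^\ast$ and comparing the two resulting inequalities. Specializing to $A=I$ and $B=xI$ gives $U^\ast(I\sm(xI))U=I\sm(xI)$, so $I\sm(xI)$ lies in the commutant of all unitaries and is therefore scalar. I then define $f$ by $I\sm(xI)=f(x)I$; since connections of positive operators are positive, $f(x)\geqs 0$, so $f:\R^+\to\R^+$.

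\emph{Operator monotonicity.} Rather than working with scalars alone, I would first establish the sharper identity $I\sm A=f(A)$, with $f(A)$ read by the continuous functional calculus, for every $A\geqs 0$ with finite spectrum. Writing $A=\sum_i\lambda_i P_i$ with the $P_i$ mutually orthogonal projections summing to $I$, the transformer inequality applied with $C=P_i$ (used in both directions, together with the compression of $\sm$ to the ranges of the $P_i$) shows that $\sm$ respects this orthogonal decomposition; combined with the scalar case $I\sm(\lambda_i I)=f(\lambda_i)I$ on each block this yields $I\sm A=\sum_i f(\lambda_i)P_i=f(A)$. Monotonicity (M1) applied to finite-spectrum $A\leqs B$ then gives $f(A)\leqs f(B)$, which, as finite-spectrum operators realise arbitrary finite Hermitian matrices through compressions, is exactly operator monotonicity of $f$; in particular $f$ is continuous. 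Finally, approximating a general $A\geqs 0$ from above by finite-spectrum operators $A_n\downarrow A$ and invoking continuity from above (M3) together with the continuity of $f$ extends the identity $I\sm A=f(A)$ to all $A\geqs 0$.

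\emph{Injectivity.} For $A>0$, congruence invariance with $C=A^{1/2}$ gives $A\sm B=A^{1/2}(I\sm(A^{-1/2}BA^{-1/2}))A^{1/2}=A^{1/2}f(A^{-1/2}BA^{-1/2})A^{1/2}$, and for a general $A\geqs 0$ one extends this by $A\sm B=\lim_{\ep\downarrow 0}(A+\ep I)\sm B$ using (M3). Thus $\sm$ is entirely determined by $f$, so the assignment $\sm\mapsto f$ is injective.

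\emph{Surjectivity and the main obstacle.} Given any operator monotone $f:\R^+\to\R^+$, I would define $\sm$ by the same formula $A\sm B:=A^{1/2}f(A^{-1/2}BA^{-1/2})A^{1/2}$ for $A>0$, extend it to $A\geqs 0$ by the limit above, and then verify that this $\sm$ satisfies (M1)--(M3) and induces $f$ back (which is immediate from $A=I$, $B=xI$). I expect this verification to be the hard part: monotonicity and the transformer inequality for the constructed $\sm$ rest on the operator concavity of $f$, equivalent to operator monotonicity by Hansen--Pedersen, and proving $C(A\sm B)C\leqs(CAC)\sm(CBC)$ directly from concavity, together with the well-definedness and monotone convergence of the $\ep\downarrow 0$ extension, is the most technical portion of the argument.
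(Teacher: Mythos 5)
The paper offers no proof of this theorem---it is quoted from Kubo and Ando---so there is no internal argument to compare you against; I will judge the proposal on its own terms. The constructive half of your outline (scalarity of $I \sm (xI)$, the identity $I \sm A = f(A)$ via finite-spectrum operators, operator monotonicity of $f$, and injectivity of $\sigma \mapsto f$) is essentially the standard Kubo--Ando argument and is sound, with one caveat: as stated in this paper, (M2) quantifies $C$ over \emph{positive} operators, so you cannot literally ``apply the transformer inequality to the unitaries $U$ and $U^\ast$''. The scalarity of $I \sm (xI)$ should instead be extracted from (M2) with $C$ a projection: if $P$ commutes with $A$ and $B$, then $P(A \sm B)P + (I-P)(A \sm B)(I-P) \leqs A \sm B$ by (M2) and (M1), and since the left-hand side equals $\tfrac12\bigl(A \sm B + S(A\sm B)S\bigr)$ for the symmetry $S = 2P - I$, this forces $A \sm B$ to commute with $P$; taking $A = I$, $B = xI$ shows $I \sm (xI)$ commutes with every projection and is therefore scalar.

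The genuine gap is surjectivity, which you explicitly defer. Verifying (M1)--(M3) for the candidate $A \sm B := A^{1/2} f(A^{-1/2} B A^{-1/2}) A^{1/2}$ directly from operator concavity is not a routine technicality to be postponed: monotonicity and the transformer inequality in the \emph{first} variable do not follow in any transparent way from concavity of $f$, and this is exactly where a direct attack stalls. The standard resolution---and the one implicit in this paper's Theorem~\ref{thm: 1-1 conn and measure}---is to invoke an integral representation of the operator monotone function, either L\"{o}wner's theorem or the form $f(x) = \int_{[0,1]} (1 \,!_t\, x)\, d\mu(t)$, and to \emph{define} $\sigma$ as the corresponding integral $A \sm B = \int_{[0,1]} A \,!_t\, B\, d\mu(t)$ of weighted harmonic means. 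Each $\,!_t\,$ is readily checked to satisfy (M1)--(M3) (it is built from sums, inverses and parallel sums), and all three axioms pass through the positive integral term by term; that this $\sigma$ induces $f$ back is then immediate from evaluating at $(I, xI)$. If you want a complete proof, the final paragraph of your proposal should be replaced by this route rather than a by-hand verification of the axioms for $A^{1/2} f(A^{-1/2} B A^{-1/2}) A^{1/2}$.
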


We call $f$ the \emph{representing function} of $\sigma$.
A connection also has a canonical characterization with respect to a Borel measure 
via a meaningful integral representation as follows.

\begin{thm}[\cite{Pattrawut}] \label{thm: 1-1 conn and measure}
Given a finite Borel measure $\mu$ on $[0,1]$, the binary operation
	\begin{align}
		A \sm B = \int_{[0,1]} A \,!_t\, B \,d \mu(t), \quad A,B \geqs 0
		\label{eq: int rep connection}
	\end{align}
	is a connection on $B(\HH)^+$.
	Moreover, the map $\mu \mapsto \sigma$ is bijective,
	%, affine and order-preserving,
	in which case the representing function of $\sigma$ is given by
	\begin{align}
		f(x) = \int_{[0,1]} (1 \,!_t\, x) \,d \mu(t), \quad x \geqs 0. \label{int rep of OMF}
	\end{align}
\end{thm}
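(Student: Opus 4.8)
The plan is to establish the three defining axioms (M1)--(M3) for the operation in \eqref{eq: int rep connection} by exploiting that each weighted harmonic mean $(A,B) \mapsto A \,!_t\, B$ is already a connection, then to read off the representing function by a one-line substitution, and finally to obtain bijectivity by composing with the Kubo--Ando correspondence of Theorem~\ref{thm: Kubo-Ando f and sigma}. First I would make sense of the integral weakly, via $\langle (A \sm B)\xi, \xi \rangle = \int_{[0,1]} \langle (A \,!_t\, B)\xi, \xi \rangle \, d\mu(t)$: the integrand is bounded (indeed $0 \leqs A \,!_t\, B \leqs A+B$) and measurable in $t$, and since $\mu$ is finite this defines a bounded positive operator $A \sm B$. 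Monotonicity (M1) and the transformer inequality (M2) then descend from their pointwise-in-$t$ counterparts for $\,!_t\,$: if $A \leqs C$ and $B \leqs D$ then $A \,!_t\, B \leqs C \,!_t\, D$ for every $t$, and integrating against the positive measure $\mu$ preserves the order, while $C(A \,!_t\, B)C \leqs (CAC)\,!_t\,(CBC)$ integrates to (M2) because $X \mapsto CXC$ commutes with the weak integral.

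The only axiom needing real care is continuity from above (M3). Suppose $A_n \downarrow A$ and $B_n \downarrow B$. For each $t$ one has $A_n \,!_t\, B_n \downarrow A \,!_t\, B$, so by (M1) the sequence $A_n \sm B_n$ is decreasing and bounded below by $0$, hence converges strongly to some $S \geqs 0$; the task is to identify $S$ with $A \sm B$. Fixing $\xi$, the scalars $\langle (A_n \,!_t\, B_n)\xi, \xi \rangle$ decrease to $\langle (A \,!_t\, B)\xi, \xi \rangle$ and are dominated by the $\mu$-integrable constant $\langle (A_1+B_1)\xi, \xi \rangle$, so dominated convergence gives $\langle (A_n \sm B_n)\xi, \xi \rangle \to \langle (A \sm B)\xi, \xi \rangle$. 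Thus $S$, being simultaneously the strong and the weak limit, equals $A \sm B$, which is (M3). This proves $\sm$ is a connection. For the representing function I would simply put $A = I$ and $B = xI$: then $I \sm (xI) = \int_{[0,1]} \bigl(I \,!_t\, (xI)\bigr)\,d\mu(t) = \bigl(\int_{[0,1]} (1 \,!_t\, x)\,d\mu(t)\bigr) I$, and comparing with Theorem~\ref{thm: Kubo-Ando f and sigma} yields \eqref{int rep of OMF}.

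It remains to prove $\mu \mapsto \sm$ is bijective. Because $\sm \mapsto f$ is already a bijection from connections onto operator monotone functions $\R^+ \to \R^+$ by Theorem~\ref{thm: Kubo-Ando f and sigma}, it suffices to show that the composite $\mu \mapsto f$, with $f(x) = \int_{[0,1]} (1 \,!_t\, x)\,d\mu(t)$, is a bijection onto that class. Writing $1 \,!_t\, x = x/((1-t)x+t)$ and substituting $s = t/(1-t)$ on $(0,1)$ turns this into the classical Löwner form $f(x) = \ap + \bt x + \int_{(0,\infty)} \tfrac{(1+s)x}{x+s}\,d\nu(s)$, where $\ap$ and $\bt$ absorb the atoms of $\mu$ at $0$ and $1$ and $\nu$ is the (finite) pushforward of $\mu$ restricted to $(0,1)$. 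Surjectivity onto the operator monotone functions is then exactly Löwner's integral-representation theorem, and injectivity is the uniqueness of that representation, which one can exhibit concretely by analytically continuing $f$ to the upper half-plane and recovering $\mu$ from the boundary behaviour of the imaginary part of $f$ through the Stieltjes inversion formula.

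The hard part will be this last paragraph: the surjectivity half is equivalent to the full force of Löwner's theorem, so the only honest options are to quote it or to reprove the analytic-continuation and Herglotz--Nevanlinna machinery behind it. The remaining work is bookkeeping --- checking that the change of variables $t \leftrightarrow s$ sets up a genuine bijection between finite measures on $[0,1]$ and the Löwner data $(\ap,\bt,\nu)$ --- after which the connection axioms (M1)--(M3) are routine, the single real analytic input among them being the dominated-convergence argument used for (M3).
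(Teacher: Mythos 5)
The paper itself offers no proof of this theorem: it is imported verbatim from \cite{Pattrawut}, so there is no internal argument to compare yours against. Your proposal is correct and follows the standard (and, as far as the cited source goes, the intended) route: define $A\sm B$ weakly, check that (M1) and (M2) integrate pointwise from the corresponding properties of each $\,!_t\,$, handle (M3) by combining the strong convergence of a decreasing bounded-below operator sequence with dominated (or monotone) convergence on the quadratic forms, read off \eqref{int rep of OMF} by setting $A=I$, $B=xI$, and then reduce bijectivity of $\mu\mapsto\sigma$ to bijectivity of $\mu\mapsto f$ via Theorem \ref{thm: Kubo-Ando f and sigma}, which after the substitution $s=t/(1-t)$ is exactly L\"{o}wner's integral representation together with its uniqueness (Stieltjes inversion). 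You are right that this last step cannot be made cheaper; quoting L\"{o}wner's theorem is the honest option. Two small points deserve an explicit sentence in a full write-up: the measurability of $t\mapsto\langle (A\,!_t\,B)\xi,\xi\rangle$ when $A$ or $B$ is singular (it is a pointwise decreasing limit of functions continuous in $t$, hence Borel), and the observation that $f(1)=\mu([0,1])<\infty$ is precisely what makes the transported L\"{o}wner measure finite, so that the correspondence between finite measures on $[0,1]$ and the data $(\ap,\bt,\nu)$ really is a bijection.
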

We call $\mu$ the \emph{associated measure} of $\sigma$.
%This theorem states that the weighted harmonic means form a building block for general connections.
A connection is a mean if and only if $f(1)=1$ or its associated measure is a probability measure.
Hence every mean can be regarded as an average of weighted harmonic means.
%The weighted harmonic means form a building block for general connections.
% there is an affine isomorphism between connections
%and finite Borel measures on $[0,1]$ via a suitable form of integral representation.
From \eqref{eq: int rep connection} and \eqref{int rep of OMF}, $\sigma$
and $f$ are related by
\begin{align}
	f(A) \:=\: I \sm A, \quad A \geqs 0.  \label{eq: f(A) = I sm A}
\end{align}
A connection $\sigma$ is said to be \emph{symmetric} if $A \sm B = B \sm A$ for all $A,B \geqs 0$. 

The notion of monotone metrics arises naturally in quantum mechanics.
A metric on the differentiable manifold of $n$-by-$n$ positive definite matrices 
is a continuous family of positive definite sesqilinear forms assigned to each invertible density matrix
in the manifold. 
A monotone metric is a metric with contraction property under stochastic maps.
It was shown in \cite{Petz} that there is a one-to-one correspondence between
operator connections and monotone metrics.
Moreover, symmetric metrics correspond to symmetric means.
%On the other hand, the concept of regularity for operator monotone functions was considered in %\cite{Gibilisco}.
%Let $f: \R^+ \to \R^+$ be an operator monotone function such that $f(1)=1$
%and $xf(1/x)=f(x)$ for all $x>0$.
%Note that such operator monotone function corresponds to a symmetric mean
%(\cite{Ando}).
In \cite{Hansen}, the author defined a symmetric metric to be \emph{nonregular} 
if $f(0)=0$ where $f$ is the associated operator monotone function.  
In \cite{Gibilisco}, $f$ is said to be \emph{nonregular} if $f(0) = 0$,
otherwise $f$ is \emph{regular}.
It turns out that the regularity of the associated operator monotone function
guarantees the extendability of this metric to the complex projective space 
generated by the pure states (see \cite{Petz-Sudar}).

%Note that the condition $f(0)=0$ also appears in Corollary \ref{cor: f is unbounded and f(0)=0}.

In the present paper, we introduce the concept of cancellability for operator connections in a natural way.
Various characterizations of cancellability with respect to operator monotone functions, Borel measures
and certain operator equations are provided.
It is shown that a connection is cancellable if and only if it is not a scalar multiple of trivial means.
%In particular, every nontrivial mean is cancellable.
Applications of this concept go to certain operator equations involving operator means.
We investigate the existence and the uniqueness of such equations.
It is shown that such equations are always solvable if and only if $f$ is unbounded and $f(0)=0$ where
$f$ is the associated operator monotone function.
We also characterize the condition $f(0)=0$ for arbitrary connections
without assuming the symmetry. Such connection is said to be nonregular.

\section{Cancellability of connections}

%The concept of cancellability of scalar means was considered in \cite{Toader-Toader}.
%We generalize this concept to operator means or, more generally, operator connections as follows.
Since each connection is a binary operation, we can define the concept of cancellability as follows.

\begin{defn}
    A connection $\sigma$ is said to be
    \begin{itemize}
        \item   \emph{left cancellable} if for each $A>0, B \geqs 0$ and $C \geqs 0$,
                \begin{align*}
                	A \sm B = A \sm C \implies B=C
                \end{align*}
        \item   \emph{right cancellable} if for each $A>0, B \geqs 0$ and $C \geqs 0$,
                \begin{align*}
                	B \sm A = C \sm A \implies B=C
                \end{align*}
        \item   \emph{cancellable} if it is both left and right cancellable.
    \end{itemize}
\end{defn}

\begin{lem} \label{lem: nonconstant OM}
    Every nonconstant operator monotone function from $\R^+$ to $\R^+$ is injective.
\end{lem}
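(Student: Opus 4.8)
The plan is to reduce the statement to a property of $f$ as a real function and then exploit the integral representation from Theorem~\ref{thm: 1-1 conn and measure}. First I would note that operator monotonicity, applied on the one-dimensional Hilbert space $\CC$, already forces $f$ to be nondecreasing as a function on $\R^+$: for scalars $0 \leqs x \leqs y$ we get $f(x) \leqs f(y)$. For a nondecreasing real function, injectivity is equivalent to strict monotonicity, so the whole task is to rule out that $f$ is constant on some nondegenerate subinterval while being globally nonconstant. It is worth stressing why scalar data alone is insufficient here: the function $x \mapsto \min\{x,1\}$ is concave and nondecreasing yet constant on $[1,\infty)$, so the genuinely operator-theoretic content of the hypothesis must enter. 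By Theorem~\ref{thm: Kubo-Ando f and sigma} every operator monotone $f:\R^+\to\R^+$ is the representing function of a unique connection, which by Theorem~\ref{thm: 1-1 conn and measure} carries an associated finite Borel measure $\mu$ on $[0,1]$ with
\begin{align*}
    f(x) \;=\; \int_{[0,1]} (1 \,!_t\, x)\, d\mu(t), \qquad x \geqs 0 .
\end{align*}

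Next I would analyze the scalar kernels $g_t(x) := 1 \,!_t\, x = x/((1-t)x+t)$. A direct differentiation gives $g_t'(x) = t/((1-t)x+t)^2$, so $g_t$ is strictly increasing on $\R^+$ precisely when $t \in (0,1]$, whereas $g_0 \equiv 1$ is constant. Splitting off the atom at $0$, I would write $f(x) = \mu(\{0\}) + \int_{(0,1]} g_t(x)\, d\mu(t)$. If $\mu$ were concentrated at $\{0\}$, then $f$ would be the constant $\mu(\{0\})$; hence the nonconstancy hypothesis forces $\mu((0,1]) > 0$.

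Finally, for any $0 \leqs x_1 < x_2$ the integrand $g_t(x_2) - g_t(x_1)$ is strictly positive for every $t \in (0,1]$, so
\begin{align*}
    f(x_2) - f(x_1) \;=\; \int_{(0,1]} \big( g_t(x_2) - g_t(x_1) \big)\, d\mu(t) \;>\; 0,
\end{align*}
since we integrate a strictly positive function against a measure giving positive mass to $(0,1]$. Thus $f$ is strictly increasing, hence injective. The main obstacle, and the step I would be most careful about, is exactly the passage that distinguishes operator monotonicity from scalar monotonicity: one must invoke the integral representation (equivalently, the analyticity of operator monotone functions coming from L\"owner's theorem) to guarantee that nonconstancy propagates from a single subinterval to the whole half-line. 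Everything else is a routine positivity estimate, modulo the harmless boundary bookkeeping at $x=0$ and at the endpoint $t=0$.
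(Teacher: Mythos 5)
Your proof is correct, but it follows a genuinely different route from the paper's. The paper argues directly from real-variable properties of $f$: assuming $f(a)=f(b)$ with $a<b$, it uses ordinary monotonicity plus concavity (which follows from operator concavity) to force $f$ to be constant on $[a,\infty)$, and then invokes smoothness of operator monotone functions to propagate constancy down to $[0,a]$, contradicting nonconstancy. You instead pull in the integral representation $f(x)=\int_{[0,1]}(1\,!_t\,x)\,d\mu(t)$ of Theorem~\ref{thm: 1-1 conn and measure}, observe that each kernel $g_t$ with $t\in(0,1]$ is strictly increasing while $g_0$ is constant, note that nonconstancy forces $\mu((0,1])>0$, and conclude strict monotonicity of $f$ by integrating a strictly positive integrand. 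Your version proves the stronger statement that a nonconstant operator monotone function is strictly increasing, and it cleanly sidesteps the most delicate step of the paper's argument: mere differentiability of a concave nondecreasing function does not by itself prevent it from being constant on $[a,\infty)$ yet nonconstant near $0$ (one really needs analyticity there), whereas in your argument the positivity of $g_t'$ does all the work. The trade-off is that you lean on the full weight of the measure-theoretic representation theorem from \cite{Pattrawut}, a heavier tool than the elementary concavity considerations the paper uses; your minor bookkeeping points (measurability and boundedness of $t\mapsto g_t(x)$, the value of $g_t$ at $x=0$ and $t=0$, and the standard fact that a strictly positive function integrated over a set of positive measure has positive integral) are all easily discharged.
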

\begin{proof}
	Let $f: \R^+ \to \R^+$ be a nonconstant operator monotone function.
	Suppose there exist  $b>a \geqs 0$ such that $f(a)=f(b)$.
	%Then $f(x)=f(a)$ for all $a \leqs x \leqs b$ by the monotonicity of $f$.
	Since $f$ is monotone increasing (in usual sense), 
	$f(x)=f(a)$ for all $a \leqs x \leqs b$ and 
	$f(y) \geqs f(b)$ for all $y \geqs b$.
	Since $f$ is operator concave,
	$f$ is concave in usual sense and hence $f(x)=f(b)$ for all $x \geqs b$.
	The case $a=0$ contradicts the fact that $f$ is nonconstant.
	Consider the case $a>0$.
	The monotonicity of $f$ yields $f(x) \leqs f(a)$ for all $0 \leqs x \leqs a$.
	The differentiability of $f$ implies that $f(x)=f(a)$ for all $x \geqs 0$,
	again a contradiction.
\end{proof}

A similar result for this lemma under the restriction that $f(0)=0$
was obtained in \cite{Molnar}.
The left-cancellability of connections is characterized as follows.

\begin{thm} \label{thm: cancallability 1}
	Let $\sigma$ be a connection with representing function $f$ and associated measure $\mu$.
	Then the following statements are equivalent:
    \begin{enumerate}
        \item[(1)]   $\sigma$ is left cancellable ;
        \item[(2)]   for each $A \geqs 0$ and $B \geqs 0$, $I \sigma A = I \sigma B \implies A=B$ ;
        \item[(3)]   $\sigma$ is not a scalar multiple of the left-trivial mean ;
        \item[(4)]	$f$ is injective, i.e., $f$ is left cancellable in the sense that
        			\begin{align*}
        				f \circ g = f \circ h \:\implies\: g=h \quad ;
        			\end{align*}
        \item[(5)]	$f$ is a nonconstant function ;
        \item[(6)]	$\mu$ is not a scalar multiple of the Dirac measure $\delta_0$ at $0$.
    \end{enumerate}
\end{thm}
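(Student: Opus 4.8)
The plan is to establish all six equivalences by separating the statement into a ``routine'' block, the equivalences among the analytic/measure-theoretic conditions $(3),(4),(5),(6)$, and a ``substantive'' block, the cycle $(1)\Rightarrow(2)\Rightarrow(4)\Rightarrow(1)$ which ties left cancellability of $\sigma$ to injectivity of its representing function $f$.

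First I would dispose of the block $(3),(4),(5),(6)$ using the correspondences already recorded. The equivalence $(4)\Leftrightarrow(5)$ is immediate from Lemma~\ref{lem: nonconstant OM}: a nonconstant operator monotone function is injective, and conversely an injective function on $\R^+$ is certainly nonconstant. For $(5)\Leftrightarrow(3)$ I would note that the left-trivial mean has constant representing function $1$ (since $\omega_l(I,xI)=I$), so $c\,\omega_l$ has the constant representing function $c$; because $\sigma\mapsto f$ is a bijection (Theorem~\ref{thm: Kubo-Ando f and sigma}), $f$ is constant precisely when $\sigma=c\,\omega_l$. For $(5)\Leftrightarrow(6)$ I would use the integral representation \eqref{int rep of OMF} together with $1 \,!_0\, x = 1$: taking $\mu=c\,\delta_0$ yields $f(x)=c$ for all $x$, and conversely the bijection $\mu\mapsto\sigma$ (Theorem~\ref{thm: 1-1 conn and measure}) forces $\mu=c\,\delta_0$ whenever $f$ is constant. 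Indeed $A \,!_0\, B=A$, so $\delta_0$ corresponds to $\omega_l$, which simultaneously explains $(3)\Leftrightarrow(6)$.

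The core is the cycle $(1)\Rightarrow(2)\Rightarrow(4)\Rightarrow(1)$. The implication $(1)\Rightarrow(2)$ is the special case $A=I$ of left cancellability. For $(2)\Rightarrow(4)$ I would invoke \eqref{eq: f(A) = I sm A}, which recasts statement $(2)$ as $f(A)=f(B)\Rightarrow A=B$ for positive operators; restricting to scalar operators $A=aI$, $B=bI$ gives injectivity of $f$ on $\R^+$. The decisive implication is $(4)\Rightarrow(1)$, resting on two ingredients. The first is the congruence reduction: for $A>0$, congruence invariance with $C=A^{1/2}$ together with \eqref{eq: f(A) = I sm A} gives
\begin{align*}
    A \sm B \:=\: A^{1/2}\bigl(I \sm (A^{-1/2}BA^{-1/2})\bigr)A^{1/2} \:=\: A^{1/2} f(A^{-1/2}BA^{-1/2}) A^{1/2}.
\end{align*}
Since $A^{1/2}$ is invertible, $A\sm B=A\sm C$ is equivalent to $f(A^{-1/2}BA^{-1/2})=f(A^{-1/2}CA^{-1/2})$. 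The second ingredient upgrades scalar injectivity of $f$ to operator injectivity: by Lemma~\ref{lem: nonconstant OM}, $f$ is nonconstant, hence strictly increasing (injective and monotone) and continuous, so it admits a continuous inverse on its range, and applying this inverse through the functional calculus yields $f(X)=f(Y)\Rightarrow X=Y$ for positive operators. Combining the two gives $A^{-1/2}BA^{-1/2}=A^{-1/2}CA^{-1/2}$, and conjugating by $A^{1/2}$ gives $B=C$, which is $(1)$.

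I expect the main obstacle to be the implication $(4)\Rightarrow(1)$, and within it the passage from scalar injectivity to operator injectivity of $f$: one must justify that the scalar inverse of $f$ acts correctly through the functional calculus, which is exactly where operator monotonicity is essential, since it supplies continuity and strict monotonicity and hence a genuine continuous inverse. The congruence reduction formula is the conceptual key that converts the two-variable cancellation problem into a one-variable injectivity statement about $f$; once it is in hand, the remaining steps are bookkeeping.
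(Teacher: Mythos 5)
Your proposal is correct and follows essentially the same route as the paper: the same identification of $k\,\omega_l$ with the constant function $k$ and the measure $k\delta_0$ for the block $(3),(5),(6)$, Lemma~\ref{lem: nonconstant OM} for $(4)\Leftrightarrow(5)$, and the same two key steps (the congruence reduction via $A^{1/2}$ and the application of $f^{-1}$ through the functional calculus) for the main cycle. The only difference is cosmetic: you merge the paper's $(4)\Rightarrow(2)$ and $(2)\Rightarrow(1)$ into a single direct implication $(4)\Rightarrow(1)$.
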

\begin{proof}
	 Clearly, (1) $\Rightarrow$ (2) $\Rightarrow$ (3) and (4) $\Rightarrow$ (5).
	 For each $k \geqs 0$, it is straightforward to show that
	 the representing function of the connection
	 \begin{align*}
	 	k \omega_l \;:\; (A,B) \mapsto kA
	 \end{align*}
	 is the constant function $f \equiv k$
	 and its associated measure is given by $k\delta_0$. % (see \cite{Pattrawut})
	Hence, we have the implications (3) $\Leftrightarrow$ (5) $\Leftrightarrow$ (6).
	By Lemma \ref{lem: nonconstant OM}, we have (5) $\Rightarrow$ (4).
	
	(4) $\Rightarrow$ (2): Assume that $f$ is injective.
    Consider $A \geqs 0$ and $B \geqs 0$ such that
    $I \sigma A = I \sigma B$. Then $f(A) = f(B)$ by \eqref{eq: f(A) = I sm A}. 
    Since $f^{-1} \circ f (x) =x$ 
    for all $x \in \R^+$, we have $A=B$.
	
    (2) $\Rightarrow$ (1): Let $A>0, B\geqs 0$ and $C \geqs 0$ be such that $A \sm B = A \sm C$.
    By the congruence invariance of $\sigma$, we have
    \begin{align*}
        A^{\frac{1}{2}} (I \sm A^{-\frac{1}{2}} B A^{-\frac{1}{2}}) A^{\frac{1}{2}} 
        \:=\: A^{\frac{1}{2}} (I \sm A^{-\frac{1}{2}} C A^{-\frac{1}{2}}) A^{\frac{1}{2}}
    \end{align*}
    and thus
    $I \sm A^{-\frac{1}{2}} B A^{-\frac{1}{2}} = I \sm A^{-\frac{1}{2}} C A^{-\frac{1}{2}} $. 
    Now, the assumption (2) implies
    $$ A^{-\frac{1}{2}} B A^{-\frac{1}{2}} \:=\: A^{-\frac{1}{2}} C A^{-\frac{1}{2}}, $$ i.e., $B=C$.
\end{proof}

Recall that the \emph{transpose} of a connection $\sigma$
is the connection 
\begin{align*}
	(A,B) \;\mapsto B \sm A.
\end{align*}
If $f$ is the representing function of $\sigma$, then
the representing function of the transpose of $\sigma$ is given by
the \emph{transpose} of $f$ (see \cite{Kubo-Ando}), defined by
\begin{align*}
        x \mapsto xf(1/x), \quad x>0. 
\end{align*}
A connection is \emph{symmetric} if it coincides with its transpose.

\begin{thm}
	Let $\sigma$ be a connection with representing function $f$ and associated measure $\mu$.
	Then the following statements are equivalent:
    \begin{enumerate}
        \item[(1)]   $\sigma$ is right cancellable ;
        \item[(2)]   for each $A \geqs 0$ and $B \geqs 0$, $A \sigma I = B \sigma I \implies A=B$ ;
        \item[(3)]   $\sigma$ is not a scalar multiple of the right-trivial mean ;
        \item[(4)]	the transpose of $f$ is injective ;
        \item[(5)]	$f$ is not a scalar multiple of the identity function $x \mapsto x$ ;
        \item[(6)]	$\mu$ is not a scalar multiple of the Dirac measure $\delta_1$ at $1$.
    \end{enumerate}
\end{thm}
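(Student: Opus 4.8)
The plan is to deduce this theorem from Theorem~\ref{thm: cancallability 1} by passing to the transpose connection. Write $\sigma'$ for the transpose of $\sigma$, so that $A\,\sigma'\,B = B \sm A$ for all $A,B \geqs 0$; a direct check of (M1)--(M3) shows that $\sigma'$ is again a connection. The point of departure is that $\sigma$ is right cancellable exactly when $\sigma'$ is left cancellable: for $A>0$, the implication $B \sm A = C \sm A \Rightarrow B=C$ is, verbatim, the implication $A\,\sigma'\,B = A\,\sigma'\,C \Rightarrow B=C$. Thus statement~(1) here is precisely statement~(1) of Theorem~\ref{thm: cancallability 1} for $\sigma'$, and I intend to transport each of the remaining conditions across this correspondence.

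To do so I would first set up a dictionary between the data of $\sigma'$ and those of $\sigma$. As recorded before the statement, the representing function of $\sigma'$ is the transpose $f'(x)=xf(1/x)$ of $f$. For the measure, the elementary identity $B \,!_{t}\, A = A \,!_{1-t}\, B$, immediate from the definition of the weighted harmonic mean, substituted into the integral representation~\eqref{eq: int rep connection} and followed by the change of variable $t \mapsto 1-t$, shows that the associated measure $\mu'$ of $\sigma'$ is the pushforward of $\mu$ under the reflection $t \mapsto 1-t$ on $[0,1]$; here the uniqueness half of Theorem~\ref{thm: 1-1 conn and measure} is what pins $\mu'$ down. I would then apply Theorem~\ref{thm: cancallability 1} to $\sigma'$ and read its six conditions back through the dictionary. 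Since $I\,\sigma'\,A = A \sm I$, the analogue of~(2) becomes~(2); since the transpose of $\om_l$ is $\om_r$, with scalar multiples corresponding, the analogue of~(3) becomes~(3); the analogue of~(4) is~(4) word for word; because $xf(1/x)$ is constant iff $f(x)=cx$ for some $c \geqs 0$, the analogue of~(5) becomes~(5); and because $t \mapsto 1-t$ interchanges the endpoints $0$ and $1$, the measure $\mu'$ is a scalar multiple of $\delta_0$ iff $\mu$ is a scalar multiple of $\delta_1$, turning the analogue of~(6) into~(6).

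The routine parts are the verification that $\sigma'$ satisfies the connection axioms and the harmonic-mean identity $B \,!_{t}\, A = A \,!_{1-t}\, B$, neither of which is difficult. The one step that genuinely requires care is the measure correspondence underlying~(6): one must confirm that the associated measure of the transpose is exactly the reflected measure rather than $\mu$ itself, and correctly track which endpoint, $0$ or $1$, is fixed by each trivial mean. I expect this bookkeeping to be the likeliest source of an index or reflection error, but no idea beyond Theorem~\ref{thm: cancallability 1} and the transpose formalism is needed.
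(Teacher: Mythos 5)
Your proposal is correct and follows essentially the same route as the paper: the paper likewise notes that $k\omega_r$ has representing function $x\mapsto kx$ and associated measure $k\delta_1$, and then obtains the theorem by ``replacing $\sigma$ with its transpose in Theorem \ref{thm: cancallability 1}.'' Your explicit dictionary (in particular the pushforward of $\mu$ under $t\mapsto 1-t$ via $B\,!_t\,A = A\,!_{1-t}\,B$) just spells out the bookkeeping the paper leaves implicit, and it is carried out correctly.
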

\begin{proof}
	It is straightforward to see that, for each $k\geqs 0$, 
	the representing function of the connection
	\begin{align*}
		k \omega_r \;:\; (A,B) \mapsto kB
	\end{align*}
	is the function $x \mapsto kx$ and its associated measure is given by
	$k\delta_1$.
	The proof is done by replacing $\sigma$ with its transpose in Theorem \ref{thm: cancallability 1}.
\end{proof}

\begin{remk}
	The injectivity of the transpose of $f$ does not imply the surjectivity of $f$.
	To see that, take $f(x)=(1+x)/2$. Then the transpose of $f$ is $f$ itself.
\end{remk}

The following results are characterizations of cancellability of connections.

\begin{cor}
	Let $\sigma$ be a connection with representing function $f$ and associated measure $\mu$.
	Then the following statements are equivalent:
    \begin{enumerate}
        \item[(1)]   $\sigma$ is cancellable ;
        %\item[(2)]   for each $A \geqs 0$ and $B \geqs 0$, $A \sigma I = B \sigma I \implies A=B$ ;
        \item[(2)]   $\sigma$ is not a scalar multiple of the left/right-trivial mean ;
        \item[(3)]	$f$ and its transpose are injective ;
        \item[(4)]	$f$ is neither a constant function nor a scalar multiple of the identity function;
        \item[(5)]	$\mu$ is not a scalar multiple of $\delta_0$ or $\delta_1$.
    \end{enumerate}
    In particular, every nontrivial mean is cancellable. 
\end{cor}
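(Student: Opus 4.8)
The plan is to deduce everything directly from Theorem~\ref{thm: cancallability 1} and its counterpart for right cancellability, since by definition a connection is cancellable precisely when it is both left and right cancellable. First I would record the basic conjunction: statement~(1) here is equivalent to the simultaneous validity of left cancellability (condition~(1) of Theorem~\ref{thm: cancallability 1}) and right cancellability (condition~(1) of the right-cancellability theorem). Then each of (2)--(5) is obtained by conjoining the corresponding left and right clauses from those two theorems, which already align pairwise. Concretely, ``$\sigma$ is not a scalar multiple of the left-trivial mean'' together with ``$\sigma$ is not a scalar multiple of the right-trivial mean'' yields (2); ``$f$ injective'' together with ``the transpose of $f$ injective'' yields (3); ``$f$ nonconstant'' together with ``$f$ not a scalar multiple of the identity'' yields (4); and ``$\mu$ not a scalar multiple of $\delta_0$'' together with ``$\mu$ not a scalar multiple of $\delta_1$'' yields (5). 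No new argument is needed beyond taking conjunctions.

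For the final assertion, I would argue that a nontrivial mean satisfies (2). Recall from the proofs of the two preceding theorems that for $k \geqs 0$ the connection $k\omega_l$ has constant representing function $f \equiv k$, while $k\omega_r$ has representing function $x \mapsto kx$. A mean is characterized by the normalization $f(1)=1$. Imposing this on $k\omega_l$ forces $k=1$, so the only scalar multiple of the left-trivial mean that is a mean is $\omega_l$ itself; symmetrically, $\omega_r$ is the only scalar multiple of the right-trivial mean that is a mean. Hence a mean distinct from both $\omega_l$ and $\omega_r$ is a scalar multiple of neither trivial mean, so condition~(2) holds and the mean is cancellable.

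I do not anticipate a genuine obstacle, as the corollary is a formal consequence of the two theorems it follows. The only point requiring a moment's care is the ``in particular'' clause, where one must check that the normalization $f(1)=1$ isolates exactly the trivial means among all scalar multiples of the trivial means; this is immediate once the representing functions $f \equiv k$ and $x \mapsto kx$ are on hand.
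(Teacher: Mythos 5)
Your proposal is correct and matches the paper's intended derivation: the corollary is stated there without proof precisely because it is the term-by-term conjunction of the left- and right-cancellability theorems, which is exactly what you carry out. Your verification of the ``in particular'' clause via the normalization $f(1)=1$ applied to the representing functions $f\equiv k$ and $x\mapsto kx$ is also the right (and only) point needing a check.
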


\begin{remk}
    The ``order cancellability" does not hold for general connections, 
    even if we restrict them to the class of means.
    For each $A,B>0$, it is not true that the condition $I \sm A \leqs I \sm B$
    or the condition $A \sm I \leqs B \sm I$ implies $A \leqs B$. 
    To see this, take $\sigma$ to be the geometric mean.
    It is not true that $A^{1/2} \leqs B^{1/2}$ implies $A \leqs B$ in general.
\end{remk}

\section{Applications to certain operator equations}

Cancellability of connections can be restated in terms of the uniqueness of certain operator
equations as follows.
A connection $\sigma$ is left cancellable if and only if 
\begin{quote}
	for each given $A>0$ and $B \geqs 0$, if the equation $A \sm X =B$
has a solution, then it has a unique solution.
\end{quote}
The similar statement for right-cancellability holds.
In this section, we investigate the existence and the uniqueness of the operator equation 
$A \sm X =B$.

%\begin{cor}
%	The followings are equivalent for a connection $\sigma$:
%	\begin{enumerate}
%		\item[(1)]	$\sigma$ is left cancellable ;
%		\item[(2)]	for each given $A>0$ and $B \geqs 0$, if the equation $A \sm X =B$
%				has a solution, then it has a unique solution.
%	\end{enumerate}
%	The similar statements for right-cancellability hold.
%\end{cor}
%\begin{proof}
%	The assertion (1) $\Rightarrow$ (2) comes from Theorem \ref{thm: operator eq 1}.
%	The condition (2) implies that $\sigma$ is not a scalar multiple of the left-trivial mean
%	and, hence, $\sigma$ is left cancellable by Theorem \ref{thm: cancallability 1}.  
%\end{proof}

\begin{thm} \label{thm: operator eq 1}
	Let $\sigma$ be a connection which is not a scalar multiple of the left-trivial mean.
	Let $f$ be its representing function.
	Given $A>0$ and $B \geqs 0$, the operator equation
	\begin{align*}
		A \, \sigma X \, \:=\: B 	%\label{eq: A sm X =B}
	\end{align*}
	has a (positive) solution if and only if $\Sp (A^{-\frac{1}{2}} B A^{-\frac{1}{2}}) 
	\subseteq \Range (f)$.
	In fact, such a solution is unique and given by
	\begin{align*}
		X \:=\: A^{\frac{1}{2}} f^{-1}(A^{-\frac{1}{2}} B A^{-\frac{1}{2}}) A^{\frac{1}{2}}.
	\end{align*}
\end{thm}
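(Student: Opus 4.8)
The plan is to reduce the equation to the normalized case $A=I$ by congruence invariance, and then to solve the resulting functional equation $f(Y)=B'$ using the continuous functional calculus. Since $A>0$, the operator $A^{-1/2}>0$ exists, so I would conjugate both sides of $A \sm X = B$ by $A^{-1/2}$. Congruence invariance gives
$$ A^{-\frac12}(A \sm X)A^{-\frac12} \:=\: \bigl(A^{-\frac12}AA^{-\frac12}\bigr) \sm \bigl(A^{-\frac12}XA^{-\frac12}\bigr) \:=\: I \sm \bigl(A^{-\frac12}XA^{-\frac12}\bigr), $$
and invoking $I \sm Y = f(Y)$ from \eqref{eq: f(A) = I sm A}, the equation becomes $f(Y)=B'$, where $Y = A^{-1/2}XA^{-1/2}$ and $B' = A^{-1/2}BA^{-1/2}$. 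Because $A^{-1/2}>0$, we have $X\geqs 0 \iff Y\geqs 0$ and $X = A^{1/2}YA^{1/2}$, so the whole problem reduces to solving $f(Y)=B'$ for $Y\geqs 0$.

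For necessity, suppose $Y\geqs 0$ satisfies $f(Y)=B'$. Since operator monotone functions are continuous, the spectral mapping theorem yields $\Sp(B') = \Sp(f(Y)) = f(\Sp(Y)) \subseteq \Range(f)$, which is precisely the stated condition (recalling that $B' = A^{-1/2}BA^{-1/2}$).

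For sufficiency and the explicit formula, the key point is that $f$ is injective by Theorem \ref{thm: cancallability 1} (as $\sigma$ is not a scalar multiple of the left-trivial mean) and monotone increasing, hence strictly increasing and continuous; therefore $\Range(f)$ is an interval and $f^{-1}$ is a well-defined continuous function on it. Assuming $\Sp(B') \subseteq \Range(f)$, I would define $Y := f^{-1}(B')$ by the continuous functional calculus; then $Y\geqs 0$ and $f(Y) = (f\circ f^{-1})(B') = B'$. Substituting back through $X = A^{1/2}YA^{1/2}$ gives the asserted solution $X = A^{1/2} f^{-1}(A^{-1/2}BA^{-1/2})A^{1/2}$. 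Uniqueness is then immediate: $\sigma$ is left cancellable by Theorem \ref{thm: cancallability 1}, so any two positive solutions of $A \sm X = B$ must coincide.

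The step I expect to be the main obstacle is the careful handling of the functional calculus for $f^{-1}$: one must confirm that $\Range(f)$ is genuinely an interval via continuity and strict monotonicity, and that $f^{-1}$ is continuous on the compact spectrum $\Sp(B')$, taking care of the half-open endpoint that arises when $f$ is bounded. The remaining steps are routine applications of congruence invariance and the spectral mapping theorem.
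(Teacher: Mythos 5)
Your proposal is correct and follows essentially the same route as the paper: congruence invariance to reduce to $I \sm Y = f(Y)$, the spectral mapping theorem for necessity, injectivity of $f$ (via Theorem \ref{thm: cancallability 1} and Lemma \ref{lem: nonconstant OM}) to define $X = A^{1/2}f^{-1}(A^{-1/2}BA^{-1/2})A^{1/2}$ for sufficiency, and left-cancellability for uniqueness. Your extra care about $\Range(f)$ being an interval and the continuity of $f^{-1}$ on $\Sp(A^{-1/2}BA^{-1/2})$ is a welcome refinement that the paper leaves implicit, but it does not change the argument.
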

\begin{proof}
	Suppose that there is a positive operator $X$ such that $A \sm X =B$.
	The congruence invariance of $\sigma$ yields
	\begin{align*}
		A^{\frac{1}{2}} (I \sm A^{-\frac{1}{2}} X A^{-\frac{1}{2}}) A^{\frac{1}{2}} \:=\: B.
	\end{align*}
	The property \eqref{eq: f(A) = I sm A} now implies
	\begin{align*}
		f(A^{-\frac{1}{2}} X A^{-\frac{1}{2}}) 
		\:=\: I \sm A^{-\frac{1}{2}} X A^{-\frac{1}{2}}\:=\: A^{-\frac{1}{2}} B A^{-\frac{1}{2}}.
	\end{align*}
	By spectral mapping theorem,
	\begin{align*}
		\Sp (A^{-\frac{1}{2}} B A^{-\frac{1}{2}} ) 
		=\Sp\left(f(A^{-\frac{1}{2}} X A^{-\frac{1}{2}}) \right)
		= f\left(\Sp(A^{-\frac{1}{2}} X A^{-\frac{1}{2}})\right)
		\subseteq	\Range (f).
	\end{align*}
	Conversely, suppose that $\Sp (A^{-\frac{1}{2}} B A^{-\frac{1}{2}}) 
	\subseteq \Range (f)$.
	Since $\sigma \neq k \omega_l$ for all $k \geqs 0$, we have that $f$ is nonconstant
	by Theorem \ref{thm: cancallability 1}. 
	It follows that $f$ is injective by Lemma \ref{lem: nonconstant OM}.
	The assumption yields the existence of the operator 
	$X \equiv A^{\frac{1}{2}} f^{-1}(A^{-\frac{1}{2}} B A^{-\frac{1}{2}}) A^{\frac{1}{2}}$.
	We obtain from the property \eqref{eq: f(A) = I sm A} that
	\begin{align*}
		A \sm X 
		\:&=\:	A \sm  A^{\frac{1}{2}} f^{-1}(A^{-\frac{1}{2}} B A^{-\frac{1}{2}}) A^{\frac{1}{2}} \\
		\:&=\:	A^{\frac{1}{2}} \left[I \sm f^{-1}(A^{-\frac{1}{2}} B A^{-\frac{1}{2}}) \right]
		 		A^{\frac{1}{2}} \\
		\:&=\:	A^{\frac{1}{2}} f\left(f^{-1}(A^{-\frac{1}{2}} B A^{-\frac{1}{2}}\right) 
				A^{\frac{1}{2}} \\
		%\:&=\:	A^{\frac{1}{2}} (A^{-\frac{1}{2}} B A^{-\frac{1}{2}}) A^{\frac{1}{2}} \\
		\:&=\:	B
	\end{align*}
	The uniqueness of a solution follows from the left-cancellability of $\sigma$.
	%Let $Y$ be another solution, i.e. $A \sm Y =B$.
	%Since $\sigma$ is , we have $X=Y$.
\end{proof}

Similarly, we have the following theorem.

\begin{thm}
	Let $\sigma$ be a connection which is not a scalar multiple of the right-trivial mean.
	Given $A>0$ and $B \geqs 0$, the operator equation
	\begin{align*}
		X \,\sigma A \, \:=\: B	%\label{eq: X sm A =B}
	\end{align*}
	has a (positive) solution if and only if $\Sp (A^{-1/2} B A^{-1/2}) \subseteq \Range (g)$,
	here $g$ is the representing function of the transpose of $\sigma$.
	In fact, such a solution is unique and given by
	\begin{align*}
		X \:=\: A^{1/2} g^{-1}(A^{-1/2}BA^{-1/2}) A^{1/2}.
	\end{align*}
	%Moreover, the solution of \eqref{eq: X sm A =B} varies continuously on each given $A>0$
	%and $B \geqs 0$.
\end{thm}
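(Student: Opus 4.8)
The plan is to reduce the statement to Theorem~\ref{thm: operator eq 1} by passing to the transpose connection. Let $\tau$ denote the transpose of $\sigma$, so that $A \,\tau\, X = X \,\sigma\, A$ for all $A,X \geqs 0$; by definition its representing function is $g$. Then the equation $X \,\sigma\, A = B$ is identical to $A \,\tau\, X = B$, and the whole assertion will follow by applying the already-proved left-hand version to $\tau$.

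First I would confirm that the hypothesis transfers correctly. The transpose of the right-trivial mean $\omega_r:(A,B)\mapsto B$ is the left-trivial mean, since $(A,B)\mapsto B\,\omega_r\,A = A = A\,\omega_l\,B$. Since transposition is involutive and commutes with multiplication by nonnegative scalars, we have $\sigma = k\omega_r$ for some $k\geqs 0$ if and only if $\tau = k\omega_l$. Hence the standing hypothesis that $\sigma$ is not a scalar multiple of $\omega_r$ is equivalent to saying that $\tau$ is not a scalar multiple of $\omega_l$, which is precisely the hypothesis under which Theorem~\ref{thm: operator eq 1} applies to the connection $\tau$.

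With this in place, Theorem~\ref{thm: operator eq 1} applied to $\tau$ (with representing function $g$) yields that $A\,\tau\,X = B$ admits a positive solution if and only if $\Sp(A^{-1/2}BA^{-1/2}) \subseteq \Range(g)$, in which case the unique solution is $X = A^{1/2} g^{-1}(A^{-1/2}BA^{-1/2}) A^{1/2}$. Translating back through the identity $A\,\tau\,X = X\,\sigma\,A$ delivers exactly the claimed solvability criterion, the uniqueness, and the closed form. The only step requiring genuine care --- and the sole potential obstacle --- is ensuring that $g^{-1}$ is well defined, i.e.\ that $g$ is nonconstant and hence injective; but this is automatic, because $\tau$ is not a scalar multiple of $\omega_l$, so $g$ is nonconstant by Theorem~\ref{thm: cancallability 1} and injective by Lemma~\ref{lem: nonconstant OM}, just as in the left-hand proof.
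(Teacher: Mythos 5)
Your proposal is correct and is exactly the argument the paper intends: the paper gives no explicit proof, simply stating ``Similarly, we have the following theorem,'' meaning one applies Theorem~\ref{thm: operator eq 1} to the transpose connection, whose representing function is $g$. Your careful check that the hypothesis transfers (transpose of $k\omega_r$ is $k\omega_l$) and that $g$ is injective fills in precisely the details the paper leaves implicit.
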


\begin{thm} \label{cor: f is unbounded and f(0)=0}
	Let $\sigma$ be a connection %which is not a scalar multiple of the left-trivial mean
	with representing function $f$.
	Then the following statements are equivalent:
	\begin{enumerate}
		\item[(1)]	$f$ is unbounded and $f(0)=0$ ; 
		\item[(2)]	$f$ is surjective, i.e., $f$ is right cancellable in the sense that
        			\begin{align*}
        				g \circ f = h \circ f \:\implies\: g=h \quad ;
        			\end{align*}
		\item[(3)]	the operator equation
					\begin{align}
						A \,\sigma X \, \:=\: B		\label{eq: A sm X =B}
					\end{align}
					has a unique solution for any given  $A>0$ and $B \geqs 0$.
	\end{enumerate}
	Moreover, if (1) holds, then
	the solution of \eqref{eq: A sm X =B} varies continuously in each given $A>0$
	and $B \geqs 0$, i.e. the map $(A,B) \mapsto X$ is separately continuous with respect to
	the strong-operator topology.
\end{thm}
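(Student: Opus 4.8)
The plan is to establish the three equivalences and then the continuity claim, exploiting the explicit solution formula already proved in Theorem \ref{thm: operator eq 1}. The key observation is that an operator monotone function $f:\R^+\to\R^+$ is, by Lemma \ref{lem: nonconstant OM}, injective as soon as it is nonconstant, and its range is an interval because $f$ is continuous (indeed operator concave). So the content of condition (1) is precisely that $\Range(f)$ is all of $\R^+$: since $f$ is increasing and concave, $f(0)=0$ pins the left endpoint of the range at $0$, while unboundedness forces the right endpoint to $+\infty$, and these two together give $\Range(f)=[0,\infty)=\R^+$. I would state this surjectivity reformulation first, as it is the hinge of the whole argument.

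For (1)$\Leftrightarrow$(2): surjectivity of $f$ is equivalent to right cancellability in the stated sense $g\circ f=h\circ f\implies g=h$ by a standard set-theoretic fact (a function is right cancellable in the composition monoid exactly when it is surjective). Concretely, if $f$ is surjective and $g\circ f=h\circ f$, then for any $y\in\R^+$ write $y=f(x)$ and conclude $g(y)=h(y)$; conversely, if $f$ is not surjective one builds two functions $g,h$ agreeing on $\Range(f)$ but differing off it. I would note that here $g,h$ range over functions on $\R^+$, and that $\Range(f)$ being an interval with endpoints controlled as above makes the construction of such $g,h$ transparent when surjectivity fails.

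For (1)$\Leftrightarrow$(3), I would invoke Theorem \ref{thm: operator eq 1} directly. That theorem says the equation $A\,\sigma X=B$ is solvable if and only if $\Sp(A^{-1/2}BA^{-1/2})\subseteq\Range(f)$, and when solvable the solution is unique (here note that $f$ nonconstant, which (1) certainly implies, rules out $\sigma=k\omega_l$, so Theorem \ref{thm: operator eq 1} applies). Hence (3), universal solvability for all $A>0$, $B\geqs 0$, holds if and only if $\Sp(A^{-1/2}BA^{-1/2})\subseteq\Range(f)$ for all such $A,B$. Since $A^{-1/2}BA^{-1/2}$ can be an arbitrary positive operator as $B$ ranges over $B(\HH)^+$ (fix $A=I$), and the spectrum of a positive operator can be any nonempty compact subset of $[0,\infty)$, universal solvability is equivalent to $[0,\infty)\subseteq\Range(f)$, i.e.\ to surjectivity, i.e.\ to (1). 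I expect this is the step requiring the most care: I must argue that spectra of positive operators exhaust all compact subsets of $\R^+$ (so that no proper subinterval of $\R^+$ can contain every such spectrum), which on an infinite-dimensional $\HH$ is immediate but deserves an explicit word.

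For the final continuity assertion under hypothesis (1), the solution is $X=A^{1/2}f^{-1}(A^{-1/2}BA^{-1/2})A^{1/2}$, where $f^{-1}:\R^+\to\R^+$ is a genuine continuous increasing function by surjectivity and injectivity of $f$. Separate strong continuity in $B$ then follows from strong continuity of continuous functional calculus along with strong continuity of the congruence map $Y\mapsto A^{1/2}YA^{1/2}$; separate continuity in $A$ is handled the same way, using that $A\mapsto A^{\pm1/2}$ is strongly continuous on strictly positive operators (taking care that $A>0$ keeps $A^{-1/2}$ well behaved). The main obstacle here is the well-known subtlety that functional calculus $T\mapsto \phi(T)$ need only be strongly continuous, not norm continuous, and only on bounded sets or along monotone nets; I would restrict attention to the relevant bounded region and invoke the standard fact that for a fixed continuous $\phi$ on a compact interval, $T_n\sconv T$ with uniformly bounded spectra implies $\phi(T_n)\sconv\phi(T)$, which suffices for the claimed separate strong-operator continuity.
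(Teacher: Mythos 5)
Your proposal is correct and follows the paper for two of the three arrows: (1) $\Rightarrow$ (2) is the intermediate value theorem argument (the paper dispatches it in one line; you spell out why monotonicity, concavity and continuity make the range an interval with the right endpoints), and (2) $\Rightarrow$ (3) is exactly the paper's appeal to Theorem \ref{thm: operator eq 1}. Where you genuinely diverge is (3) $\Rightarrow$ (1). The paper argues directly: uniqueness gives left-cancellability, hence $f$ injective; a solution of $I\,\sigma X=0$ forces $\Sp(X)$ to be a singleton $\{\ld\}$ with $f(\ld)=0$; and a contradiction via $X>\ep I$ and $X>(\ep/2)I$ forces $\ld=0$, after which unboundedness is extracted from solvability of $I\,\sigma X=kI$. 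Your route instead reuses the necessity half of Theorem \ref{thm: operator eq 1}: solvability of $I\,\sigma X=\ld I$ forces $\ld\in\Range(f)$, and letting $\ld$ range over $\R^+$ gives surjectivity outright. This is shorter and cleaner, and it makes the appeal to ``spectra exhaust all compacta'' unnecessary --- singleton spectra $\Sp(\ld I)=\{\ld\}$ already suffice, so the argument works on any nonzero $\HH$, finite-dimensional or not. Two small points to tidy up: first, your parenthetical verifying the hypothesis of Theorem \ref{thm: operator eq 1} (that $\sigma\neq k\omega_l$) only covers the direction starting from (1); when starting from (3) you should either note that the necessity direction of that theorem's proof never uses this hypothesis, or observe that uniqueness in (3) gives left-cancellability and hence $f$ nonconstant by Theorem \ref{thm: cancallability 1} (which is the paper's move). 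Second, your continuity discussion is if anything more careful than the paper's, which states strong continuity of the functional calculus without the boundedness caveat; your version is fine.
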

\begin{proof}
	(1) $\Rightarrow$ (2): This follows directly from the intermediate value theorem.
	
	(2) $\Rightarrow$ (3): It is immediate from Theorem \ref{thm: operator eq 1}.
	%Since $f$ is unbounded, $\sigma$ is not a scalar multiple of the left-trivial mean.
	%Since $f$ is unbounded and continuous, the intermediate value theorem implies that
	%$\Range(f) = [f(0),\infty) = \R^+$.
	%Then (2) follows by Theorem \ref{thm: operator eq 1}.
	
	(3) $\Rightarrow$ (1): Assume (3). The uniqueness of solution for the equation $A \sm X =B$
	implies the left-cancellability of $\sigma$.
	By Theorem \ref{thm: cancallability 1}, $f$ is injective.
	The assumption (3) implies the existence of a positive operator $X$ such that
	\begin{align*}
		f(X) \:=\: I \sm X \:=\: 0.
	\end{align*}
	The spectral mapping theorem implies that $f(\ld)=0$ for all $\ld \in \Sp(X)$.
	Since $f$ is injective, we have $\Sp(X) =\{\ld\}$ for some $\ld \in \R^+$.
	Suppose that $\ld>0$. Then there is $\epsilon >0$ such that $X > \ep I >0$.
	It follows from the monotonicity of $\sigma$ that
	\begin{align*}
		0 \:=\: I \sm X  \:\geqs\: I \sm \ep I \:=\: f(\ep)I,
	\end{align*}
	i.e. $f(\ep)=0$. Hence, $\ep = \ld$. Similarly, since $X > (\ep/2) I >0$, we have
	$\ep = 2 \ld$, a contradiction. Thus, $\ld=0$ and $f(0)=0$.
	
	Now, let $k>0$. The assumption (3) implies the existence of $X \geqs 0$ such that
	$I \sm X =kI$. Since $f(X)=kI$, we have $f(\ld)=k$ for all $\ld \in \Sp(X)$.
	Since $\Sp(X)$ is nonempty, there is $\ld \in \Sp(X)$ such that $f(\ld)=k$.
	Therefore, $f$ is unbounded.
	
	Assume that (1) holds. Then the map $(A,B) \mapsto X$ is well-defined.
	Recall that if $A_n \in B(\HH)^+$ converges strongly to $A$, 
	then $\phi(A_n)$ converges strongly to $\phi(A)$
	for any continuous function $\phi$.
	It follows that the map 
	$$(A,B) \mapsto X = A^{\frac{1}{2}} f^{-1}(A^{-\frac{1}{2}} B A^{-\frac{1}{2}}) A^{\frac{1}{2}} $$ 
	is separately continuous in each variable.
\end{proof}

%Remark here that an operator monotone function $f: \R^+ \to \R^+$
%is surjective if and only if $f$ is unbounded and $f(0)=0$.
%A similar result under the condition that $\sigma$ is a symmetric connection
%was obtained in \cite{Molnar}.

\begin{ex}

Consider the quasi-arithmetic power mean $\#_{p,\ap}$
with exponent $p \in [-1,1]$ and weight $\ap \in (0,1)$,
defined by
\begin{align*}
	A \,\#_{p,\ap}\, B \:=\: \left[(1-\ap)A^p + \ap B^p \right]^{1/p}.
\end{align*}
Its representing function of this mean is given by
\begin{align*}
	f_{p,\ap}(x) \:=\: (1-\ap+\ap x^p)^{1/p}.
\end{align*}
The special cases $p=1$ and $p=-1$ are the $\ap$-weighted arithmetic mean
and the $\ap$-weighted harmonic mean, respectively.
The case $p=0$ is defined by continuity and, in fact, $\#_{0,\ap} = \#_{\ap}$ and 
$f_{0,\ap}(x)=x^{\ap}$.
Given $A>0$ and $B \geqs 0$, consider the operator equation
	\begin{align}
		A \,\#_{p,\ap}\, X \:=\: B.  	\label{eq weighted GM}
	\end{align}

\underline{The case $p=0$ :}  %Here, $\#_{0,\ap} = \#_{\ap}$ the $\ap$-weighted geometric mean. 	
	%where $\ap \in (0,1)$.
	Since the range of $f_{0,\ap}(x)=x^{\ap}$ is $\R^+$, 
	the equation \eqref{eq weighted GM} always has a unique solution given by
	\begin{align*}
		X %\:&=\:	A^{1/2} f^{-1}(A^{-1/2}BA^{-1/2}) A^{1/2} \\
		  \:=\: A^{1/2} (A^{-1/2}BA^{-1/2})^{1/\ap} A^{1/2} %\\
		\:\equiv \: A \,\#_{1/\ap}\, B.
	\end{align*}

\underline{The case $0<p\leqs 1$ :} %Here $#_{p,\ap} = \#_{\ap}$ the $\ap$-weighted geometric mean. 
The range of $f_{p,\ap}$ is the interval $[(1-\ap)^{1/p}, \infty)$.
Hence, the equation \eqref{eq weighted GM} is solvable if and only if
$\Sp(A^{-1/2} B A^{-1/2}) \subseteq [(1-\ap)^{1/p}, \infty)$, i.e., $B \geqs (1-\ap)^{1/p} A$.

%In this case, the solution of $A \, \triangledown_{\ap} \, X = B$ is given by
%\begin{align*}
%		X \:=\: \left( 1- \frac{1}{\ap}\right) A + \frac{1}{\ap} B \:\equiv \: A \,%\triangledown_{1/\ap}\, B.
%\end{align*}

\underline{The case $-1 \leqs p< 0$ :} %Here $#_{p,\ap} = \#_{\ap}$ the $\ap$-weighted geometric %mean. 
%Note that the case $p=-1$ is the $\ap$-weighted harmonic mean.
The range of $f_{p,\ap}$ is the interval $[0, (1-\ap)^{1/p})$.
Hence, the equation \eqref{eq weighted GM} is solvable if and only if
$\Sp(A^{-1/2} B A^{-1/2}) \subseteq [0, (1-\ap)^{1/p})$, i.e., $B < (1-\ap)^{1/p} A$.

%In this case, the solution of $A \, !_{\ap} \, X = B$ for $B>0$ is given by
%	\begin{align*}
%		X \:=\: \left[ \left(1-\frac{1}{\ap}\right)A^{-1} + \frac{1}{\ap}B^{-1} \right]^{-1}
%		\:\equiv \: A \,!_{1/\ap}\, B.
%	\end{align*}
%For general $B \geqs 0$, we use the continuity and we also obtain $X=A \,!_{1/\ap}\, B$.

For each $p \in [-1,0) \cup (0,1]$ and $\ap \in (0,1)$, we have
\begin{align*}
	f_{p,\ap}^{-1} (x) \:=\:  \left(1-\frac{1}{\ap} + \frac{1}{\ap} x^p \right)^{1/p}.
\end{align*}
Hence, the solution of \eqref{eq weighted GM} is given by
\begin{align*}
	X \:=\: \left[(1-\frac{1}{\ap})A^p + \frac{1}{\ap} B^p \right]^{1/p} 
	\:\equiv\: A \;\#_{p, \frac{1}{\ap}}\; B.
\end{align*}
\end{ex}

\begin{ex}
	Let $\sigma$ be the logarithmic mean with representing function
	\begin{align*}
		f(x) \:=\: \frac{x-1}{\log x}, \quad x>0.
	\end{align*}
	Here, $f(0) \equiv 0$ by continuity. We see that $f$ is unbounded. %Hence, $\Range(f)=\R^+$.
	Thus, the operator equation $A \,\sigma X \, =B$ is solvable for all $A>0$ and $B \geqs 0$.
\end{ex}

\begin{ex}
	Let $\eta$ be the dual of the logarithmic mean, i.e.,
	\begin{align*}
		\eta \,:\, (A,B) \mapsto \left(A^{-1} \, \sigma \, B^{-1}\right)^{-1}
	\end{align*}
	where $\sigma$ denotes the logarithmic mean.
	The representing function of $\eta$ is given by
	\begin{align*}
		f(x) \:=\: \frac{x}{x-1} \log{x}, \quad x>0.
	\end{align*}
	We have that $f(0) \equiv 0$ by continuity and $f$ is unbounded. % which imply $\Range(f)=\R^+$.
	Therefore, the operator equation $A \,\eta X \, =B$ is solvable for all $A>0$ and $B \geqs 0$.
\end{ex}

\section{Regularity of connections}

In this section, we consider the regularity of connections.

\begin{thm} \label{thm: regularity}
	Let $\sigma$ be a connection with representing function $f$ and associated measure $\mu$.
	Then the following statements are equivalent.
	\begin{enumerate}
		\item[(1)]	$f(0)=0$ ;
		\item[(2)]	$\mu(\{0\}) = 0$ ;
		\item[(3)]	$I \sm 0 = 0$ ;
		\item[(4)]	$A \sm 0 =0$ for all $A \geqs 0$ ;
		\item[(5)]	for each $A \geqs 0$, $0 \in \Sp(A) \Longrightarrow 0 \in \Sp(I \sm A)$ ; 
		\item[(6)]	for each $A,X \geqs 0$, $0 \in \Sp(A) \Longrightarrow 0 \in \Sp(X \sm A)$. 
	\end{enumerate}
\end{thm}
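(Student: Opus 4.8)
The plan is to treat the condition $f(0)=0$ as the central hub and connect every other statement to it, so that the six assertions fall into a small number of short arguments radiating from (1).

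First I would record the two ``dictionary'' translations. For (1) $\Leftrightarrow$ (2) I would evaluate the integral representation \eqref{int rep of OMF} at $x=0$: since $1 \,!_t\, x = x/((1-t)x+t)$ converges pointwise to $\mathbf{1}_{\{0\}}(t)$ as $x \to 0^+$ and satisfies $1 \,!_t\, x \leqs 1$ for $x\in(0,1]$, the dominated convergence theorem gives $f(0) = \int_{[0,1]} \mathbf{1}_{\{0\}}\,d\mu = \mu(\{0\})$, whence (1) $\Leftrightarrow$ (2). For (1) $\Leftrightarrow$ (3) I would use the case $x=0$ of the defining relation of $f$, namely $I \sm 0 = f(0)\,\id$, so that $I \sm 0 = 0$ holds exactly when $f(0)=0$. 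For (3) $\Leftrightarrow$ (4), the implication (4) $\Rightarrow$ (3) is the instance $A=\id$; for (3) $\Rightarrow$ (4) I would note that congruence invariance gives $A \sm 0 = A^{1/2}(I \sm 0)A^{1/2} = 0$ when $A>0$, and then extend to arbitrary $A \geqs 0$ by applying (M3) to $A + \tfrac1n \id \downarrow A$ against the constant sequence $0$, since each $(A+\tfrac1n \id)\sm 0 = 0$.

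Next come the spectral statements. For (1) $\Rightarrow$ (5) I would combine $I \sm A = f(A)$ from \eqref{eq: f(A) = I sm A} with the spectral mapping theorem, $\Sp(I \sm A) = f(\Sp(A))$: if $0\in\Sp(A)$ then $0 = f(0) \in \Sp(I \sm A)$. For the converse (5) $\Rightarrow$ (1) I would apply (5) to $A = 0$, so that $0\in\Sp(0)$ forces $0\in\Sp(I \sm 0) = \{f(0)\}$, i.e. $f(0)=0$. Finally (6) $\Rightarrow$ (5) is just the case $X=\id$, so the only remaining work is (1) $\Rightarrow$ (6).

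The main obstacle is precisely (1) $\Rightarrow$ (6), because the left slot now carries an arbitrary $X \geqs 0$ rather than $\id$, and one can no longer read the spectrum of $X \sm A$ directly off $f$. The device I plan to use is domination together with positive homogeneity of connections: fixing $c>0$ with $X \leqs c\,\id$, monotonicity (M1) yields $0 \leqs X \sm A \leqs (c\,\id) \sm A = c\,f(A/c)$, where the equality uses homogeneity and $I \sm B = f(B)$. Since $0\in\Sp(A)$ and $f(0)=0$, spectral mapping gives $0\in\Sp\bigl(c\,f(A/c)\bigr)$, hence $\inf\Sp\bigl(c\,f(A/c)\bigr)=0$. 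As $X \sm A$ is a positive operator dominated by one whose spectrum reaches $0$, its own spectral infimum is squeezed between $0$ and $0$, so $0\in\Sp(X \sm A)$.

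The only point requiring care in this last step is the standard monotonicity of the bottom of the spectrum, namely that $0 \leqs S \leqs T$ implies $\inf\Sp(S) \leqs \inf\Sp(T)$, which follows by comparing numerical ranges, $\inf\Sp(S) = \inf_{\norm{\xi}=1}\langle S\xi,\xi\rangle$; together with $\inf\Sp(S)\in\Sp(S)$ for positive $S$ this makes the squeeze rigorous. Assembling the pieces, the implications (1) $\Leftrightarrow$ (2), (1) $\Leftrightarrow$ (3), (3) $\Leftrightarrow$ (4), (1) $\Leftrightarrow$ (5), and (1) $\Rightarrow$ (6) $\Rightarrow$ (5) $\Rightarrow$ (1) close the cycle and give the full equivalence.
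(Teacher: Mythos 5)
Your proposal is correct, and most of it (the dictionary $f(0)=\mu(\{0\})$, $I\sm 0=f(0)I$, the congruence-plus-continuity argument for (3) $\Rightarrow$ (4), and (5) $\Rightarrow$ (1) via $A=0$) coincides with the paper's proof. The one place where you take a genuinely different route is the key implication (1) $\Rightarrow$ (6). The paper first treats $X>0$, writing $X \sm A = X^{1/2} f(X^{-1/2}AX^{-1/2})X^{1/2}$ by congruence invariance and reading off non-invertibility from $0=f(0)\in\Sp\bigl(f(X^{-1/2}AX^{-1/2})\bigr)$, and only then passes to general $X\geqs 0$ by comparing with $(X+I)\sm A$. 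You instead avoid the case split entirely: dominating $X\leqs c\,\id$ and using homogeneity to get $0\leqs X\sm A\leqs c\,f(A/c)$, you reduce everything to the single spectral-mapping computation for $I\sm(\cdot)$ and one application of the monotonicity of $\inf\Sp$. Both arguments ultimately rest on the same fact (that $0\leqs S\leqs T$ with $T$ non-invertible forces $S$ non-invertible, i.e.\ monotonicity of the bottom of the spectrum, which the paper also uses implicitly in its last line); yours is slightly more economical since congruence invariance with respect to $X^{1/2}$ is never needed, while the paper's version makes the role of the representing function in the left slot more explicit. Your dominated-convergence derivation of $f(0)=\mu(\{0\})$ is also a touch more careful than the paper's direct substitution into the expanded integral representation, but reaches the same identity.
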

\begin{proof}
	From the integral representation \eqref{int rep of OMF}, we have 
	\begin{align}
		f(x) \:=\: \mu(\{0\}) + \mu(\{1\})x + \int_{(0,1)} (1\,!_t\, x)\,d\mu(t), \quad x\geqs 0,
		\label{eq: int rep of OMR expand}
	\end{align}
	i.e. $f(0)=\mu(\{0\})$. From the property \eqref{eq: f(A) = I sm A}, we have
	$I \sm 0 =f(0)I$. Hence, (1)-(3) are equivalent. 
	It is clear that (4) $\Rightarrow$ (3) and (6) $\Rightarrow$ (5).
	
	(3) $\Rightarrow$ (4): Assume that $I \sm 0=0$. For any $A>0$,
	we have by the congruence invariance that
	\begin{align*}
		A \sm 0 \:=\: A^{\frac{1}{2}} (I \sm 0) A^{\frac{1}{2}} \:=\: 0.
	\end{align*}
	For general $A \geqs 0$, 
	we have $(A + \ep I) \sm 0 =0$ for all $\ep>0$ by the previous claim 
	and hence $A \sm 0 =0$ by the continuity from above.
	 
	%(6) $\Rightarrow$ (5): It is clear.
	
	(5) $\Rightarrow$ (1): We have $0 \in \Sp(I \sm 0) = \Sp(f(0)I) = \{f(0)\}$, i.e. $f(0)=0$.
	
	(1) $\Rightarrow$ (6): Assume $f(0)=0$. Consider $A \geqs 0$ such that $0 \in \Sp(A)$,
	i.e. $A$ is not invertible.
	Assume first that $X>0$. Then
	\begin{align*}
		X \sm A 
		\:=\: X^{\frac{1}{2}} (I \sm X^{-\frac{1}{2}} A X^{-\frac{1}{2}}) X^{\frac{1}{2}}
		\:=\: X^{\frac{1}{2}} f(X^{-\frac{1}{2}} A X^{-\frac{1}{2}}) X^{\frac{1}{2}}.
	\end{align*}
	Since $X^{-\frac{1}{2}} A X^{-\frac{1}{2}}$ is not invertible, we have 
	$0 \in \Sp(X^{-\frac{1}{2}} A X^{-\frac{1}{2}})$ and hence by spectral mapping theorem
	\begin{align*}
		0 \:=\: f(0) 
		\in f \left(\Sp(X^{-\frac{1}{2}} A X^{-\frac{1}{2}}) \right)
		\:=\: \Sp\left(f(X^{-\frac{1}{2}} A X^{-\frac{1}{2}}) \right).
	\end{align*}
	This implies that $X \sm A$ is not invertible.
	Now, consider $X \geqs 0$. The previous claim shows that $(X+I)\sm A$ is not invertible.
	Since $X \sm A \leqs (X+I) \sm A$, we conclude that $X \sm A$ is not invertible.
\end{proof}

We say that a connection $\sigma$ is \emph{nonregular} if one (thus, all) of the conditions
in Theorem \ref{thm: regularity} holds, otherwise $\sigma$ is \emph{regular}.
Hence, regular connections correspond to regular operator monotone functions and regular monotone metrics.

\begin{remk}
Recall from \cite{Pattrawut} that every connection $\sigma$ can be written as
the sum of three connections.
%\begin{align*}
%	\sigma = \sigma_{ac} + \sigma_{sd} + \sigma_{sc}
%\end{align*}
%where $\sigma_{ac}$, $\sigma_{sd}$ and $\sigma_{sc}$ are connections as follows.
The \emph{singularly discrete part} %$\sigma_{sd}$ 
is a countable sum of weighted harmonic means with nonnegative coefficients.
The \emph{absolutely continuous part} %$\sigma_{ac}$ 
is a connection admitting an integral representation
with respect to Lebesgue measure $m$ on $[0,1]$.
The \emph{singularly continuous part} %$\sigma_{sc}$ 
is associated to a continuous measure mutually singular to $m$.
Hence, a connection, whose associated measure has no singularly discrete part, is nonregular.
\end{remk}

\begin{remk}
	Let $\sigma$ be a connection with representing function $f$ and associated measure $\mu$.
	%such that $f(0)=0$.
	Let $g$ be the representing function of the transpose of $\sigma$.
	%Recall from \cite{pattrawut} that the associated measure of the transpose of $f$ is given by
	%$\mu\Theta$ where $\Theta : [0,1] \to [0,1], t \mapsto 1-t$.
	%In view of Theorem \ref{thm: regularity}, we will show that $g(0)=\mu \Theta(\{0\})$.
	%Indeed,
	From \eqref{eq: int rep of OMR expand},
	\begin{align*}
		g(0) = \lim_{x \to 0^+} xf(\dfrac{1}{x}) = \lim_{x \to \infty} \frac{f(x)}{x}
		= \mu(\{1\}).
		%=\mu \Theta(\{0\}).
	\end{align*}
	Thus, the transpose of $\sigma$ is nonregular if and only if $\mu(\{1\})=0$.
\end{remk}

\begin{thm} \label{thm: regularity of mean}
	The following statements are equivalent for a mean $\sigma$.
	\begin{enumerate}
		\item[(1)]	$\sigma$ is nonregular ;
		\item[(2)]	$I \sm P = P$ for each projection $P$.
	\end{enumerate}
\end{thm}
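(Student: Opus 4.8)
The plan is to collapse the whole statement onto the single identity $f(A) = I \sm A$ recorded in \eqref{eq: f(A) = I sm A}, applied to projections via the continuous functional calculus. The key observation is that an (orthogonal) projection $P$ satisfies $P^2=P$, so $\Sp(P) \subseteq \{0,1\}$, and therefore functional calculus evaluates $f$ on $P$ using only the two numbers $f(0)$ and $f(1)$. Concretely, writing the spectral projections of $P$ onto its range and kernel as $P$ and $I-P$, I would record the uniform formula
\begin{align*}
	I \sm P \:=\: f(P) \:=\: f(0)(I-P) + f(1)P .
\end{align*}
Because $\sigma$ is a \emph{mean}, the normalization $f(1)=1$ is available, so this simplifies to $I \sm P = f(0)(I-P) + P$. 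This one display will drive both implications, and it handles the degenerate cases $P=0$ and $P=I$ automatically (then $P=0$ or $I-P=0$, respectively).

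For the implication (1) $\Rightarrow$ (2), I would invoke Theorem \ref{thm: regularity}: nonregularity of $\sigma$ is equivalent to $f(0)=0$. Substituting $f(0)=0$ into the boxed formula gives $I \sm P = P$ for every projection $P$, which is exactly (2). For the converse (2) $\Rightarrow$ (1), I would simply test (2) on the zero projection $P=0$, which is a legitimate projection. This yields $I \sm 0 = 0$, i.e. condition (3) of Theorem \ref{thm: regularity}, and hence $\sigma$ is nonregular. (Equivalently, testing (2) on any proper projection $P \neq I$ forces $f(0)(I-P)=0$ with $I-P \neq 0$, again giving $f(0)=0$; choosing $P=0$ is the cleanest route.)

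The hard part here is essentially nonexistent, since the representing function does all the work; the only points that genuinely require care are the justification that $I \sm P$ may be replaced by $f(P)$ through \eqref{eq: f(A) = I sm A}, and the functional-calculus evaluation $f(P)=f(0)(I-P)+f(1)P$, which presupposes the well-definedness of the boundary value $f(0)$ (guaranteed by the integral representation \eqref{eq: int rep of OMR expand}, where $f(0)=\mu(\{0\})$) together with the mean normalization $f(1)=1$. Once these are in place, both directions are immediate, and no further estimate, continuity argument, or appeal to the transformer inequality is needed.
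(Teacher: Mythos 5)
Your proposal is correct and follows essentially the same route as the paper: both directions reduce to the identity $I \sm P = f(P)$ together with $f(1)=1$ and the fact that $\Sp(P)\subseteq\{0,1\}$, and the converse is obtained exactly as in the paper by testing on $P=0$ to get $f(0)=0$. Your explicit spectral formula $f(P)=f(0)(I-P)+f(1)P$ is just a slightly more verbose rendering of the paper's observation that $f(x)=x$ on $\{0,1\}\supseteq\Sp(P)$.
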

\begin{proof}
	(1) $\Rightarrow$ (2): Assume that $f(0)=0$ and consider a projection $P$.
	%Consider $A \geqs 0$. First suppose that $A$ is a projection.
	Since $f(1)=1$, we have  $f(x)=x$ for all $x \in \{0,1\}\supseteq \Sp(P)$.
	Thus $I \sm P = f(P)=P$.
	
	(2) $\Rightarrow$ (1):	We have $0=I \sm 0=f(0)I$, i.e. $f(0)=0$.
\end{proof}

To prove the next result, recall the following lemma.

\begin{lem}[\cite{Kubo-Ando}]
	If $f: \R^+ \to \R^+$ is an operator monotone function such that $f(1)=1$
	and $f$ is neither the constant function $1$ nor the identity function, then
	\begin{enumerate}
		\item[1)]	$0<x<1 \implies x<f(x)<1$
		\item[2)]	$1<x \implies 1<f(x)<x$.
	\end{enumerate}
\end{lem}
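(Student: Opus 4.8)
The plan is to reduce the entire lemma to the scalar integral representation \eqref{int rep of OMF}. Every operator monotone $f:\R^+\to\R^+$ is the representing function of a connection, so it carries an associated measure $\mu$; and since $f(1)=\int_{[0,1]}(1\,!_t\,1)\,d\mu(t)=\mu([0,1])=1$, this $\mu$ is a \emph{probability} measure on $[0,1]$. Introducing the kernel $k_t(x):=1\,!_t\,x=\dfrac{x}{(1-t)x+t}$, which is strictly positive for every $x>0$ and $t\in[0,1]$, the whole lemma becomes an assertion about the signs of $f(x)-x=\int_{[0,1]}[k_t(x)-x]\,d\mu(t)$ and $f(x)-1=\int_{[0,1]}[k_t(x)-1]\,d\mu(t)$.

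First I would record the two elementary identities
\[
k_t(x)-x=\frac{x(1-t)(1-x)}{(1-t)x+t},\qquad k_t(x)-1=\frac{t(x-1)}{(1-t)x+t}.
\]
Because the denominators are strictly positive, the signs are transparent. For $0<x<1$ the first quantity is $\geqs 0$, strictly positive exactly for $t\in[0,1)$ and vanishing only at $t=1$, while the second is $\leqs 0$, strictly negative exactly for $t\in(0,1]$ and vanishing only at $t=0$. For $x>1$ the signs reverse: $k_t(x)-x\leqs 0$ with equality only at $t=1$, and $k_t(x)-1\geqs 0$ with equality only at $t=0$. Integrating each against the probability measure $\mu$ immediately yields the \emph{weak} inequalities $x\leqs f(x)\leqs 1$ for $0<x<1$ and $1\leqs f(x)\leqs x$ for $x>1$.

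The crux, and the only place where the hypotheses ``$f$ is neither the constant $1$ nor the identity'' are consumed, is upgrading these to strict inequalities. Here I would invoke the standard measure-theoretic fact that a nonnegative Borel function integrating to zero against $\mu$ must vanish $\mu$-almost everywhere. Thus, for a fixed $0<x<1$, the equality $f(x)=x$ would force $\mu$ to be supported on the zero set $\{t:k_t(x)=x\}=\{1\}$, i.e. $\mu=\delta_1$; substituting into \eqref{int rep of OMF} gives $f=\mathrm{id}$, contradicting the hypothesis. Likewise $f(x)=1$ forces $\mu=\delta_0$ and hence $f\equiv 1$, which is excluded. The case $x>1$ is symmetric: $f(x)=x$ again forces $\mu=\delta_1$ and $f(x)=1$ again forces $\mu=\delta_0$. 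Combining the weak inequalities with the exclusion of these two degenerate Diracs delivers the strict inequalities in both 1) and 2).

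I expect the only real obstacle to be bookkeeping: keeping straight which endpoint ($t=0$ or $t=1$) supplies the non-strict case in each of the four comparisons, and matching the forced Dirac measure to the correct excluded function. No operator-theoretic input beyond the scalar representation is needed; once the two kernel identities and the ``integrand vanishes a.e.'' step are in place, the argument closes at once. (A purely concavity-based proof is also available, using that $f$ is concave and increasing with $f(1)=1$, but it requires a separate strictness argument and is less self-contained than routing through $\mu$.)
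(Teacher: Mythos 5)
Your proposal is correct, and it is worth noting that the paper itself offers no proof of this lemma at all: it is quoted verbatim from Kubo--Ando as a known result, so there is nothing internal to compare against. What you supply is a genuine, self-contained derivation that routes everything through the measure representation \eqref{int rep of OMF} rather than through the concavity argument one would find in the original source. I checked the two kernel identities
\[
k_t(x)-x=\frac{x(1-t)(1-x)}{(1-t)x+t},\qquad k_t(x)-1=\frac{t(x-1)}{(1-t)x+t},
\]
and the sign bookkeeping: the denominator $(1-t)x+t$ is a convex combination of $x>0$ and $1>0$, hence strictly positive, the zero sets of the two numerators are exactly $\{t=1\}$ and $\{t=0\}$ respectively, and $f(1)=\mu([0,1])=1$ does make $\mu$ a probability measure, so the weak inequalities follow by integration. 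The strictness step is also sound: equality at a single $x\neq 1$ forces the continuous, one-signed integrand to vanish $\mu$-a.e., which pins $\mu$ to $\delta_1$ (giving $f=\mathrm{id}$) or to $\delta_0$ (giving $f\equiv 1$), precisely the two excluded cases. Compared with the classical concavity proof (which gets $f(x)\geqs x$ on $(0,1)$ from concavity through the points $(0,f(0))$ and $(1,1)$ and then needs a separate argument to rule out equality), your route has the advantage of making the strictness analysis completely mechanical and of using only machinery the paper has already set up in Theorem \ref{thm: 1-1 conn and measure}; its only cost is that it leans on the full integral-representation theorem, which is a heavier input than concavity.
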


\begin{thm}
	Let $\sigma$ be a nontrivial mean.
	For each $A \geqs 0$, if $I \sm A =A$, then $A$ is a projection.
	Hence, the following statements are equivalent:
\begin{enumerate}
	\item[(1)]	$\sigma$ is nonregular ;
	\item[(2)]	for each $A \geqs 0$, $A$ is a projection if and only if $I \sm A = A$.
\end{enumerate}
\end{thm}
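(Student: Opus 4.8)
The plan is to prove the standalone assertion first, then read off the equivalence, with the assertion carrying the real content. Throughout I would use the identity $I \sm A = f(A)$ from \eqref{eq: f(A) = I sm A}, turning the hypothesis $I \sm A = A$ into the operator equation $f(A) = A$. Since $\sigma$ is a nontrivial mean, its representing function $f$ is operator monotone with $f(1) = 1$ and is neither the constant function $1$ nor the identity, so the lemma recalled just above applies to $f$.

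The first step is to locate the scalar fixed points of $f$. The lemma gives $f(x) > x$ for $0 < x < 1$ and $f(x) < x$ for $x > 1$, so no point of $(0,1) \cup (1,\infty)$ solves $f(x) = x$; together with $f(1) = 1$ this confines the fixed-point set of $f$ to $\{0,1\}$. To pass to operators, I would apply the continuous functional calculus to $g(x) := f(x) - x$: from $g(A) = f(A) - A = 0$ and the spectral mapping theorem $\Sp(g(A)) = g(\Sp(A))$ one gets $g(\lambda) = 0$, i.e. $f(\lambda) = \lambda$, for every $\lambda \in \Sp(A)$. Hence $\Sp(A) \subseteq \{0,1\}$, and since $x^2 - x$ vanishes on $\{0,1\}$ we obtain $A^2 = A$; being positive, $A$ is therefore a projection. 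This proves the standalone assertion for every nontrivial mean, independently of regularity.

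Given the assertion, the equivalence is immediate. For (1) $\Rightarrow$ (2), the implication that $I \sm A = A$ forces $A$ to be a projection is exactly the assertion, while the converse is Theorem \ref{thm: regularity of mean}: if $f(0) = 0$ then $f$ fixes $\{0,1\} \supseteq \Sp(A)$ for every projection $A$, so $f(A) = A$. For (2) $\Rightarrow$ (1), I would apply the biconditional in (2) to the zero operator, which is a projection; this forces $I \sm 0 = 0$, and since $I \sm 0 = f(0)I$ we conclude $f(0) = 0$, i.e. $\sigma$ is nonregular.

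The main obstacle is the standalone assertion, and within it the transfer from the operator equation $f(A) = A$ to the pointwise constraint $f(\lambda) = \lambda$ on $\Sp(A)$. The strict inequalities furnished by the Kubo--Ando lemma are indispensable here: for the identity function every point is a fixed point, so the conclusion genuinely fails for the right-trivial mean, which is why nontriviality must be assumed. Once $\Sp(A)$ is pinned to $\{0,1\}$ the rest is routine functional calculus.
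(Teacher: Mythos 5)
Your proof is correct and follows essentially the same route as the paper: reduce $I \sm A = A$ to $f(A) = A$, deduce $f(\lambda)=\lambda$ on $\Sp(A)$ via functional calculus, and invoke the Kubo--Ando lemma to pin $\Sp(A)$ to $\{0,1\}$. Your treatment is slightly more explicit than the paper's (applying the spectral mapping theorem to $f - \mathrm{id}$ rather than appealing to ``injectivity of the continuous functional calculus,'' and spelling out the two directions of the equivalence, which the paper leaves implicit), but the underlying argument is identical.
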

\begin{proof}
	Since $I \sm A=A$, we have $f(A)=A$ by \eqref{eq: f(A) = I sm A}.
	%In view of Theorem \ref{thm: regularity of mean}, we will show that $A$ is a projection.
	Hence $f(x)=x$ for all $x \in \Sp(A)$ by the injectivity of the continuous functional calculus.
	Since $\sigma$ is a nontrivial mean, the previous lemma forces that $\Sp(A) \subseteq \{0,1\}$,
	i.e. $A$ is a projection.
\end{proof}

\begin{thm}
	Under the condition that $\sigma$ is a left-cancellable connection
	with representing function $f$, the following
	statements are equivalent:
	\begin{enumerate}
		\item[(1)]	$\sigma$ is nonregular.
		\item[(2)]	The equation $f(x)=0$ has a solution $x$.
		\item[(3)]	The equation $f(x)=0$ has a unique solution $x$.
		\item[(4)]	The only solution to $f(x)=0$ is $x=0$.
		\item[(5)]	For each $A>0$, the equation $A \sm X =0$ has a solution $X$.
		\item[(6)]	For each $A>0$, the equation $A \sm X =0$ has a unique solution $X$.
		\item[(7)]	For each $A>0$, the only solution to the equation $A \sm X =0$ is $X=0$.
		\item[(8)]	The equation $I \sm X =0$ has a solution $X$.
		\item[(9)]	The equation $I \sm X =0$ has a unique solution $X$.
		\item[(10)]	The only solution to the equation $I \sm X =0$ is $X=0$.
	\end{enumerate}
	Similar results for the case of right-cancellability hold.
\end{thm}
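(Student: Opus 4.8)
The plan is to group the ten conditions into three blocks and prove the equivalences blockwise, then bridge the blocks. The three blocks are: the scalar conditions (1)--(4) concerning the equation $f(x)=0$; the operator conditions (8)--(10) concerning $I \sm X = 0$; and the operator conditions (5)--(7) concerning $A \sm X = 0$ for arbitrary $A>0$. The two bridges are the identity $I \sm X = f(X)$ from \eqref{eq: f(A) = I sm A} and congruence invariance. Throughout I would use the standing hypothesis of left-cancellability, which by Theorem \ref{thm: cancallability 1} is equivalent to the injectivity of $f$.

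First I would dispose of the scalar block. Injectivity of $f$ means $f(x)=0$ has at most one root, so (2) $\Leftrightarrow$ (3) is immediate. Since $f$ is operator monotone it is increasing in the usual sense and takes values in $\R^+$, so the existence of a root forces $f(0) \leqs f(x) = 0$, i.e. $f(0)=0$, while $f(0)=0$ exhibits $x=0$ as the (then unique) root. Recalling $f(0)=\mu(\{0\})$ from \eqref{eq: int rep of OMR expand}, this yields (1) $\Leftrightarrow$ (2); injectivity pins the root at $x=0$, giving (4), and (4) $\Rightarrow$ (3) $\Rightarrow$ (2) is trivial. Hence (1)--(4) are equivalent.

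Next I would transfer to the block (8)--(10) through $I \sm X = f(X)$. If $f(0)=0$ then $X=0$ solves $I \sm X = 0$; conversely, for any positive solution $X$ the spectral mapping theorem gives $f(\ld)=0$ for every $\ld \in \Sp(X)$, and nonemptiness of $\Sp(X)$ produces a scalar root of $f$, so (8) $\Rightarrow$ (2). For uniqueness, injectivity together with $f(0)=0$ collapses $f^{-1}(\{0\})$ to $\{0\}$, whence $\Sp(X)=\{0\}$ and a positive operator with this spectrum is $X=0$. This closes the cycle (1) $\Rightarrow$ (10) $\Rightarrow$ (9) $\Rightarrow$ (8) $\Rightarrow$ (1). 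Finally, for the block (5)--(7) I would invoke congruence invariance: for $A>0$,
\begin{align*}
	A \sm X \:=\: A^{\frac{1}{2}}\bigl(I \sm A^{-\frac{1}{2}} X A^{-\frac{1}{2}}\bigr) A^{\frac{1}{2}},
\end{align*}
and since $A^{\frac{1}{2}}$ is invertible the substitution $Y = A^{-\frac{1}{2}} X A^{-\frac{1}{2}}$ is a bijection of $B(\HH)^+$ onto itself carrying the solution set of $A \sm X = 0$ exactly onto that of $I \sm Y = 0$, with $X=0 \Leftrightarrow Y=0$. This matches (5)$\leftrightarrow$(8), (6)$\leftrightarrow$(9) and (7)$\leftrightarrow$(10); as the image equation is independent of $A$, the quantifier ``for each $A>0$'' causes no trouble, and taking $A=I$ recovers the special cases directly. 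The statement for right-cancellability then follows by replacing $\sigma$ and $f$ by their transposes throughout.

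The step I expect to be most delicate is the uniqueness threading in the middle block: I must read $f(X)=0$ as an \emph{operator} identity, translate it via the spectral mapping theorem into $\Sp(X)\subseteq f^{-1}(\{0\})$, and only then use injectivity together with $f(0)=0$ to force $\Sp(X)=\{0\}$ and conclude $X=0$. Keeping the distinction between ``a scalar root of $f$'' (which suffices for existence) and ``the operator equation $f(X)=0$'' (which is what drives uniqueness) is the crux, and is exactly what lets the existence-type and uniqueness-type conditions in each block be identified.
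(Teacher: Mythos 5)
Your proposal is correct and uses the same essential ingredients as the paper's proof: injectivity of $f$ from Theorem \ref{thm: cancallability 1}, the identity $I \sm X = f(X)$, the spectral mapping theorem, and congruence invariance to pass between $A \sm X = 0$ and $I \sm Y = 0$. The only difference is organizational — you argue blockwise with bridges, while the paper threads all ten statements into a single implication cycle — so this is essentially the same proof.
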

\begin{proof}
	It is clear that (1) $\Rightarrow$ (2), (6) $\Rightarrow$ (5) and (10) $\Rightarrow$ (8).
	Since $f$ is injective by Theorem \ref{thm: cancallability 1}, 
	we have (2) $\Rightarrow$ (3) $\Rightarrow$ (4).
	%Since $f$ is injective and monotone, we have (3) $\Rightarrow$ (4).
	%(3) $\Rightarrow$ (4): 
	%Let $x \geqs 0$ be such that $f(x)=0$. Suppose that $x>0$.
	%	Since $f$ is injective and monotone, we have
	%	\begin{align*}
	%		f(0) < f(x)=0,
	%	\end{align*}
	%	contradicting the fact that $f(0) \geqs 0$.
		
	(4)	$\Rightarrow$ (10):	Let $X \geqs 0$ be such that $I \sm X =0$. 
	Then $f(X)=0$ by \eqref{eq: f(A) = I sm A}.
	By spectral mapping theorem, $f(\Sp(X)) =\{0\}$. Hence, $\Sp(X)=\{0\}$, i.e. $X=0$.
	
	(8)	$\Rightarrow$ (9): Consider $X \geqs 0$ such that $I \sm X =0$. Then $f(X)=0$.
	Since $f$ is injective with continuous inverse, we have $X=f^{-1}(0)$.
	
	(9)	$\Rightarrow$ (6):	Use congruence invariance.
	
	(5)	$\Rightarrow$ (7):	Let $A>0$ and consider $X \geqs 0$ such that $A \sm X=0$.
	Then $A^{\frac{1}{2}} (I \sm A^{-\frac{1}{2}} X A^{-\frac{1}{2}}) A^{\frac{1}{2}} =0$,
	i.e. $f(A^{-\frac{1}{2}} X A^{-\frac{1}{2}}) = I \sm A^{-\frac{1}{2}} X A^{-\frac{1}{2}} =0$.
	Hence,
	\begin{align*}
		f\left( \Sp(A^{-\frac{1}{2}} X A^{-\frac{1}{2}}) \right)
		= \Sp\left( f(A^{-\frac{1}{2}} X A^{-\frac{1}{2}}) \right)
		=\{0\}.
	\end{align*}
	Suppose there exists $\ld \in \Sp(A^{-\frac{1}{2}} X A^{-\frac{1}{2}})$
	such that $\ld>0$.
	Then $f(0)<f(\ld)=0$, a contradiction.
	Hence, $\Sp(A^{-\frac{1}{2}} X A^{-\frac{1}{2}}) =\{0\}$, i.e.
	$A^{-\frac{1}{2}} X A^{-\frac{1}{2}} =0$ or $X=0$.
	
	(7) $\Rightarrow$ (1): We have $f(0)I = I \sm 0 =0$, i.e. $f(0)=0$.
\end{proof}

\end{document}